\documentclass[11pt,reqno]{amsart}
\usepackage{amsthm}
\usepackage{blkarray}
\usepackage{amsmath,amssymb,graphicx,amscd,xy}
\usepackage[usenames,dvipsnames]{color}
\usepackage{graphicx}
\usepackage{longtable}
\usepackage{mathtools}
\usepackage{color}
\usepackage{verbatim}
\usepackage{pict2e}
\usepackage[utf8,utf8x]{inputenc}
\usepackage{enumitem}
\usepackage{longtable}
\usepackage{hyperref}

\parskip=\smallskipamount

\allowdisplaybreaks

%
%
%  THE DOCUMENT
%
%

\begin{document}

\author{Lars Simon}
\address{Lars Simon, Department of Mathematical Sciences, Norwegian University of Science and Technology, Trondheim, Norway}
\email{lars.simon@ntnu.no}

\title{Homogeneous Plurisubharmonic Polynomials in Higher Dimensions}

\begin{comment}
Lemma:
\theoremstyle{plain}
\newtheorem{lemmaname}[propo]{Lemma}
\begin{lemmaname}
\label{lemmaname}
content
\end{lemmaname}

Proposition:
\theoremstyle{plain}
\newtheorem{propositionname}[propo]{Proposition}
\begin{propositionname}
\label{propositionname}
content
\end{propositionname}

Theorem:
\theoremstyle{plain}
\newtheorem{theoremname}[propo]{Theorem}
\begin{theoremname}
\label{theoremname}
content
\end{theoremname}

Notation:
\theoremstyle{definition}
\newtheorem{notationname}[propo]{Notation}
\begin{notationname}
\label{notationname}
content
\end{notationname}

Definition:
\theoremstyle{definition}
\newtheorem{definitionname}[propo]{Definition}
\begin{definitionname}
\label{definitionname}
content
\end{definitionname}

Remark:
\theoremstyle{remark}
\newtheorem{remarkname}{Remark}
\begin{remarkname}
\label{remarkname}
content
\end{remarkname}
\end{comment}

\begin{comment}
TO DO:
1) write Abstract
2) write Intro
3) keywords, subject classification (see immediately below this list)
4) write content
5) set title
6) bibliography (refspaper6.bib)
7) alle notes/Rechnungen scannen und in den notes Ordner packen, damit das gesamte Material zusammen ist
\end{comment}

%\subjclass[2010]{Primary 32F17, 32T99.  Secondary 32T20.}
%\keywords{$s$-H-convexity, worm domain, Stein neighborhood basis.}

%
\begin{abstract}
We prove several results on homogeneous plurisubharmonic polynomials on $\mathbb{C}^n$, $n\in\mathbb{Z}_{\geq 2}$. Said results are relevant to the problem of constructing local bumpings at boundary points of pseudoconvex domains of finite D'Angelo $1$-type in $\mathbb{C}^{n+1}$.
\end{abstract}

\maketitle

\section{introduction}\label{introsection}

Local bumpings at boundary points of certain bounded, smoothly bounded pseudoconvex domains of finite D'Angelo $1$-type in $\mathbb{C}^{n+1}$, $n\in\mathbb{Z}_{\geq 1}$, have been used both in the construction of peak functions (e.g.\ \cite{MR0492400}, \cite{MR1016439}, \cite{MR1207878}) and in the construction of integral kernels for solving the $\overline{\partial}$-equation (e.g.\ \cite{MR835766}, \cite{MR1070924}).

As explained in \cite{MR2452636}, \cite{MR2993440}, the problem of constructing such local bumpings naturally leads to the study of homogeneous plurisubharmonic polynomials on $\mathbb{C}^n$. Furthermore, in \cite{1909.04080}, results on homogeneous plurisubharmonic polynomials on $\mathbb{C}^2$ by Bharali, Stensønes \cite{MR2452636}, applied in combination with results from \cite{FornStens2010}, played an important role in establishing sup-norm estimates for solutions to the $\overline{\partial}$-equation for a large class of pseudoconvex domains in $\mathbb{C}^3$. Specifically, the crucial results on homogeneous plurisubharmonic polynomials on $\mathbb{C}^2$ are the following:

\theoremstyle{plain}
\newtheorem*{resulteinsmaerz}{Result}
\begin{resulteinsmaerz}[{\cite[Proposition 1]{MR2452636}}]
	\label{resulteinsmaerz}
	Let $P$ be a homogeneous, plurisubharmonic, non-pluriharmonic polynomial on $\mathbb{C}^2$. Then there are at most finitely many complex lines through the origin in $\mathbb{C}^2$ along which $P$ is harmonic.
\end{resulteinsmaerz}

\theoremstyle{plain}
\newtheorem*{resultzweimaerz}{Result}
\begin{resultzweimaerz}[{\cite[Theorem 2]{MR2452636}}]
	\label{resultzweimaerz}
	Let $P\colon\mathbb{C}^2\to\mathbb{R}$ be a non-constant, homogeneous, plurisubharmonic polynomial without pluriharmonic terms. Assume that there exists a holomorphic function $g\colon\mathbb{C}^2\to\mathbb{C}$, nonsingular on a non-empty open set $U\subseteq\mathbb{C}^2$, such that $P$ is harmonic along every level set of $g|_U$.\\
	Then there exist a homogeneous, subharmonic polynomial $s\colon\mathbb{C}\to\mathbb{R}$ and a homogeneous holomorphic polynomial $h\colon\mathbb{C}^2\to\mathbb{C}$, such that $P=s\circ h$ on $\mathbb{C}^2$.
\end{resultzweimaerz}

\theoremstyle{plain}
\newtheorem*{resultdreimaerz}{Result}
\begin{resultdreimaerz}[{\cite[Theorem 3]{MR2452636}}]
	\label{resultdreimaerz}
	Let $P\colon\mathbb{C}^2\to\mathbb{R}$ be a non-constant, homogeneous, plurisubharmonic polynomial without pluriharmonic terms and assume that $P$ is homogeneous of degree $2d_1$ in $z_1, \overline{z_1}$ and homogeneous of degree $2d_2$ in $z_2, \overline{z_2}$, where $d_1 , d_2 \in\mathbb{Z}_{\geq 1}$.
	Then there exist a homogeneous, subharmonic polynomial $s\colon\mathbb{C}\to\mathbb{R}$ without harmonic terms and integers $j,l\in\mathbb{Z}_{\geq 1}$, such that
	\begin{align*}
	P(z_1 , z_2 ) = s({z_1}^j {z_2}^l )\text{ for all }(z_1 , z_2 )\in\mathbb{C}^2\text{.}
	\end{align*}
\end{resultdreimaerz}

When attempting to adapt the methods from \cite{1909.04080} to higher dimensions, it is natural to ask for generalizations of the above-mentioned results on homogeneous plurisubharmonic polynomials on $\mathbb{C}^2$ to higher dimensions. Specifically, it is natural to ask the following questions:

\theoremstyle{plain}
\newtheorem*{questioneins}{Question A}
\begin{questioneins}
	\label{questioneins}
	Given a homogeneous, plurisubharmonic, non-pluriharmonic polynomial $P$ on $\mathbb{C}^n$, $n\geq 2$, is it true that there are at most finitely many complex hyperplanes through the origin in $\mathbb{C}^n$ along which $P$ is pluriharmonic?
\end{questioneins}

\theoremstyle{plain}
\newtheorem*{questionzwei}{Question B}
\begin{questionzwei}
	\label{questionzwei}
	Let $P\colon\mathbb{C}^n\to\mathbb{R}$, $n\in\mathbb{Z}_{\geq 2}$, be a non-constant, homogeneous, plurisubharmonic polynomial without pluriharmonic terms. Assume that there exists a holomorphic map $G\colon\mathbb{C}^n\to\mathbb{C}^m$, $1\leq m\leq n-1$, nonsingular on a non-empty open set $U\subseteq\mathbb{C}^n$, such that $P$ is pluriharmonic along every level set of $G|_U$.\\
	Are there necessarily a homogeneous, plurisubharmonic polynomial $Q\colon\mathbb{C}^{m}\to\mathbb{R}$ and holomorphic polynomials $F_1, \dots ,F_m\colon\mathbb{C}^n\to\mathbb{C}$, all homogeneous of the same degree, such that $P=Q\circ (F_1 ,\dots ,F_m )$ on $\mathbb{C}^n$?
\end{questionzwei}

\theoremstyle{plain}
\newtheorem*{questiondrei}{Question C}
\begin{questiondrei}
	\label{questiondrei}
	Let $P\colon\mathbb{C}^n\to\mathbb{R}$, $n\in\mathbb{Z}_{\geq 2}$, be a non-constant, homogeneous, plurisubharmonic polynomial without pluriharmonic terms and assume $P$ is homogeneous in $l$ variables separately, $1\leq l\leq n-1$.\\
	Are there necessarily a homogeneous, plurisubharmonic polynomial $Q\colon\mathbb{C}^{n-l}\to\mathbb{R}$ and holomorphic polynomials $F_1, \dots ,F_{n-l}\colon\mathbb{C}^n\to\mathbb{C}$, all homogeneous of the same degree, such that $P=Q\circ (F_1 ,\dots ,F_{n-l} )$ on $\mathbb{C}^n$?
\end{questiondrei}

The purpose of this paper is to provide a detailed answer to Questions A, B and C. A formal statement of the results can be found in Section \ref{statementofresultssection}.

The answer to Question A is ``yes'' (Proposition \ref{theoremfinmanyhyperplanes}).

 The answer to Question B is ``no'' in general, even if we additionally assume that the component functions of $G$ are holomorphic polynomials which are all homogeneous of the same degree (Proposition \ref{propositioncounterexample}). However, the answer is ``yes'' in the special case $m=1$, i.e., when the polynomial is pluriharmonic along the level sets of a single holomorphic function (Theorem \ref{theoremfolisinglefct}).
 
The answer to Question C is ``no'' in general (Proposition \ref{propositioncounterexample}). However, in the special case where the polynomial is homogeneous in all $n$ variables separately, the answer is ``yes'' (Corollary \ref{corollaryhomogeveryvarseparately}). Furthermore, we get that the answer is ``almost yes'', or ``yes, up to certain singular holomorphic coordinate changes'' in the general setting (Theorem \ref{theoremhomogseparately}). In many cases, the latter theorem can be used to get the desired bumping results. This is important, since Proposition \ref{propositioncounterexample} shows that the result by Bharali, Stensønes \cite[Theorem 3]{MR2452636} does not generalize in this setting.

\section{Statement of Results}\label{statementofresultssection}

In this section we state the results of this paper. All the proofs can be found in the later sections.

The answer to Question A is ``yes''. We have:

\theoremstyle{plain}
\newtheorem{theoremfinmanyhyperplanes}[propo]{Proposition}
\begin{theoremfinmanyhyperplanes}
	\label{theoremfinmanyhyperplanes}
	Let $P\colon\mathbb{C}^n\to\mathbb{R}$, $n\in\mathbb{Z}_{\geq 2}$, be a homogeneous, plurisubharmonic, non-pluriharmonic polynomial. Then there are at most finitely many complex hyperplanes through the origin in $\mathbb{C}^n$ along which $P$ is pluriharmonic.
\end{theoremfinmanyhyperplanes}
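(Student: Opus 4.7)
The plan is to argue by contradiction, reducing to the two-dimensional case handled by the first Result of Bharali and Stens\o nes recalled in the Introduction. For $n=2$ the statement is exactly that Result, so assume $n\geq 3$. Suppose the set $\mathcal{H}$ of complex hyperplanes through $0$ along which $P$ is pluriharmonic is infinite. Writing the condition ``$P|_H$ is pluriharmonic'' in terms of the coefficients of a linear form defining $H$ yields polynomial equations, so $\mathcal{H}$ is a positive-dimensional Zariski-closed subset of the dual projective space $(\mathbb{P}^{n-1})^{*}$.

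The key algebraic observation is the following. Since $P$ is plurisubharmonic, the Levi form $L(z)$ is Hermitian positive semidefinite at every point $z$, so $P|_H$ pluriharmonic forces $H\subseteq \ker L(z)$ for every $z\in H$. Since two distinct complex hyperplanes in $\mathbb{C}^n$ span all of $\mathbb{C}^n$, \emph{if a nonzero $z$ lies on two distinct members of $\mathcal{H}$, then $L(z)=0$.}

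For $k\in\{1,2\}$ and a $k$-dimensional linear subspace $\ell$ of $\mathbb{C}^n$ through $0$, put $\mathcal{H}_\ell=\{H\in\mathcal{H}:\ell\subseteq H\}$, and let $\mathcal{L}_k$ denote the locus in the relevant Grassmannian where $\mathcal{H}_\ell$ is infinite; upper-semicontinuity of fibre dimension applied to the incidence variety $\{(\ell,H):\ell\subseteq H,\ H\in\mathcal{H}\}$ makes $\mathcal{L}_k$ Zariski closed. Neither $\mathcal{L}_1$ nor $\mathcal{L}_2$ fills its Grassmannian, for otherwise every nonzero $z$ would lie on at least two members of $\mathcal{H}$, forcing $L\equiv 0$ by the observation above and contradicting non-pluriharmonicity. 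Similarly $\{V\in G(2,n):P|_V \text{ is pluriharmonic}\}$ is a proper Zariski-closed subvariety (else polarizing $L(z)[u,\bar u]=0$ for all $z,u$ in every 2-plane gives $L\equiv 0$). Choose $V\in G(2,n)$ outside these three proper closed sets. Then $\mathcal{H}_V$ is finite, the slicing map $\mathcal{H}\setminus\mathcal{H}_V\to\mathbb{P}(V)$, $H\mapsto H\cap V$, has finite fibres except over the finite set $\mathbb{P}(V)\cap\mathcal{L}_1$, and its domain is infinite; hence its image contains infinitely many distinct complex lines through $0$ in $V$. For each such line $H\cap V$, the restriction $P|_V$ is harmonic along $H\cap V$ because $P|_H$ is pluriharmonic and $H\cap V\subseteq H$. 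But $P|_V$ is a homogeneous, plurisubharmonic, non-pluriharmonic polynomial on $V\cong\mathbb{C}^2$, so the first Result of Bharali and Stens\o nes allows only finitely many such lines. This contradiction finishes the proof.

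The main technical obstacle is the third paragraph: producing a $2$-plane $V$ that simultaneously avoids all three exceptional Zariski-closed subsets of $G(2,n)$ while ensuring the members of $\mathcal{H}$ genuinely cut $V$ in pairwise distinct lines. The introduction of the auxiliary loci $\mathcal{L}_1,\mathcal{L}_2$ is designed precisely to organize this dimension count.
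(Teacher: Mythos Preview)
Your strategy is quite different from the paper's. The paper argues by induction on $n$: it slices not by a $2$--plane but by a \emph{hyperplane} $A$ on which $P$ is not pluriharmonic, so that the inductive hypothesis (applied to $P|_A$) forces $\{A\cap H:H\in\mathcal H\}$ to be finite. Pigeonhole then gives an $(n-2)$--plane $V\subseteq A$ with $A\cap H_j=V$ for infinitely many $j$; repeating with a second hyperplane $B\not\supseteq V$ produces $W$ with $B\cap H_j=W$, and then all surviving $H_j$ contain $V+W$, which has dimension $\ge n-1$, collapsing them to one hyperplane. No algebraic geometry is needed. Your key observation---that a point lying on two distinct members of $\mathcal H$ must have $L(z)=0$---is correct and elegant, and your idea of slicing directly down to a $2$--plane is natural.

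However, the argument has a genuine gap in its final inference. You assert that because the slicing map $\mathcal H\setminus\mathcal H_V\to\mathbb P(V)$ has infinite domain and has finite fibres away from the finite set $\mathbb P(V)\cap\mathcal L_1$, its image must be infinite. This is false as stated: nothing you have arranged excludes the possibility that infinitely many $H\in\mathcal H$ all contain a single line $\ell_0\in\mathbb P(V)\cap\mathcal L_1$, in which case the image collapses to $\{\ell_0\}$ together with finitely many other points. Your three genericity conditions on $V$ only force $\mathbb P(V)\cap\mathcal L_1$ to be finite, not empty, and that is not enough. (A repair is possible---e.g.\ pick $V$ meeting $\bigcap_{H\in\mathcal H_0}H$ only at $0$ for each infinite irreducible piece $\mathcal H_0$, so the map cannot be constant on $\mathcal H_0$---but you have not supplied it.) There is also a secondary issue: the condition ``$P|_H$ pluriharmonic'' depends on both the defining covector of $H$ and its conjugate, so $\mathcal H$ is a priori only real--algebraic, and your invocations of complex Zariski closedness and fibre--dimension semicontinuity need to be justified in that setting.
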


The answer to Question B is ``no'', even if we additionally assume that the component functions of $G$ are holomorphic polynomials which are all homogeneous of the same degree. The answer to Question C is ``no'' as well. All of this is implied by the following:

\theoremstyle{plain}
\newtheorem{propositioncounterexample}[propo]{Proposition}
\begin{propositioncounterexample}
	\label{propositioncounterexample}
	Let $P\colon\mathbb{C}^3\to\mathbb{R}$,
	\begin{align*}
	P(z,w_1,w_2)=|z|^2\cdot (|w_1 |^4+|w_1^2-w_1 w_2|^2+|w_2|^4)\text{.}
	\end{align*}
	Then $P$ is a non-constant, homogeneous, plurisubharmonic polynomial without pluriharmonic terms and $P$ is homogeneous in one variable separately (see Definition \ref{notatiohomogseperately}). Furthermore, away from the coordinate hyperplanes, $P$ is pluriharmonic along the level sets of $G\colon\mathbb{C}^3\to\mathbb{C}^2$, $G(z,w_1 ,w_2 )=(zw_1^2,zw_2^2)$.\\
	However, there do not exist a homogeneous, plurisubharmonic polynomial $Q\colon\mathbb{C}^2\to\mathbb{R}$ and holomorphic polynomials $F_1 ,F_2\colon\mathbb{C}^3\to\mathbb{C}$, homogeneous of the same degree, such that $P=Q\circ (F_1 , F_2 )$ on $\mathbb{C}^3$.
\end{propositioncounterexample}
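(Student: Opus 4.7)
The plan is to verify the positive assertions by means of a simple sum of squared moduli representation of $P$, and then devote the main effort to ruling out the decomposition $P = Q\circ(F_1,F_2)$. The non-existence splits into two cases according to the common degree $d$ of $F_1,F_2$, and the case $d = 3$ is where the main obstacle lies.

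For the positive assertions, I would use the representation
\[
P = |zw_1^2|^2 + |zw_1(w_1-w_2)|^2 + |zw_2^2|^2\text{,}
\]
which displays $P$ as a sum of squared moduli of three homogeneous holomorphic polynomials of degree $3$. From this, plurisubharmonicity, homogeneity of degree $6$ (in particular of degree $2$ in $(z,\bar{z})$ separately), non-constancy, and the absence of pluriharmonic terms are all immediate. To establish pluriharmonicity along the level sets of $G = (zw_1^2, zw_2^2)$ away from the coordinate hyperplanes, I would fix a point $(z_0, w_1^0, w_2^0)$ with all coordinates nonzero, note that the connected component of the level set of $G$ through this point admits the holomorphic parameterization $w_1 \mapsto (z_0(w_1^0)^2/w_1^2,\, w_1,\, (w_2^0/w_1^0)\,w_1)$, and verify by a direct substitution that $P$ reduces to a constant along this curve; being constant, it is a fortiori pluriharmonic.

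For the non-existence, suppose toward a contradiction that $P = Q\circ(F_1,F_2)$ as in the statement, with each $F_i$ homogeneous of degree $d$. Every monomial of $P$ has both holomorphic and antiholomorphic degree exactly $3$, while every term $u^\alpha\bar{u}^\beta$ of $Q$ contributes to $Q\circ F$ with bidegree $(d|\alpha|, d|\beta|)$. Matching forces $|\alpha| = |\beta| = 3/d$ for every nonvanishing term of $Q$, so $d\in\{1,3\}$. In the case $d = 3$, $Q$ is a homogeneous plurisubharmonic polynomial of degree $2$ on $\mathbb{C}^2$, i.e., a positive semidefinite Hermitian form; by spectral decomposition, $P = Q\circ F$ can then be written as a sum of at most two squared moduli of homogeneous holomorphic cubic polynomials on $\mathbb{C}^3$. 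To rule this out I plan to use the standard fact that any representation of $P$ as a sum of $r$ squared moduli of cubic polynomials must satisfy $r \geq \operatorname{rank}(C)$, where $C = (c_{mn})$ is the Hermitian matrix defined by $P = \sum_{m,n} c_{mn}\, m\,\bar{n}$ as $m, n$ range over cubic monomials. A short direct computation shows that in our setting the nonzero entries of $C$ lie in the principal $3\times 3$ block indexed by $\{zw_1^2,\, zw_1 w_2,\, zw_2^2\}$, where it is
\[
\begin{pmatrix}2 & -1 & 0\\ -1 & 1 & 0\\ 0 & 0 & 1\end{pmatrix}\text{,}
\]
which has rank $3$, contradicting $r\leq 2$. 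This rank computation is the most delicate part of the proof.

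In the remaining case $d = 1$, $F = (F_1,F_2)\colon\mathbb{C}^3\to\mathbb{C}^2$ is linear, so $\ker F$ is nontrivial and $P = Q\circ F$ is invariant under translation by $\ker F$. Taking $v = (v_0, v_1, v_2)\in\ker F\setminus\{0\}$ and differentiating the identity $P(p + tv) = P(p)$ in $t$ at $t = 0$ yields $v_0\partial_z P + v_1\partial_{w_1}P + v_2\partial_{w_2}P \equiv 0$. Using $P = |z|^2 R(w_1, w_2)$ and cancelling $\bar{z}$ turns this into the polynomial identity $v_0 R + z(v_1\partial_{w_1}R + v_2\partial_{w_2}R) \equiv 0$. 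Comparing powers of $z$ forces $v_0 = 0$ and $v_1\partial_{w_1}R + v_2\partial_{w_2}R \equiv 0$; extracting, for instance, the $w_2\bar{w}_2^2$ and $w_1\bar{w}_1^2$ coefficients in the latter then forces $v_2 = 0$ and $v_1 = 0$, the desired contradiction.
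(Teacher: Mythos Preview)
Your proof is correct, but it proceeds quite differently from the paper's argument.

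For the positive assertions, the paper simply invokes its Lemma~3.5 (the general ``homogeneous-in-separate-variables implies pluriharmonic along level sets of $G$'' lemma), whereas you parametrize the level curves explicitly and observe that $P$ is actually \emph{constant} along them. Your route is more self-contained and in fact establishes slightly more.

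For the non-existence, the two arguments diverge more substantially. The paper applies a Levi-form/rank argument (in the spirit of its Lemma~3.2) to the $5\times 3$ matrix obtained by stacking the Jacobian of $(F_1,F_2)$ on top of the gradients of the three $P_\beta$'s. From this it extracts the PDE $3z\,\partial F_j/\partial z = d\,F_j$, forcing $F_j(z,w_1,w_2)=z^{d/3}f_j(w_1,w_2)$ and hence $d=3$ directly; the case $d=1$ never arises. You instead use elementary bidegree matching to reduce to $d\in\{1,3\}$, then dispose of $d=1$ separately via translation invariance of $P$ along $\ker F$. Your $d=1$ argument is clean and correct (one should note that removing pluriharmonic terms from $Q$ is legitimate because the Levi form is unaffected, so the reduced $Q$ remains plurisubharmonic; you use this implicitly in the $d=3$ case).

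In the decisive $d=3$ case the two proofs make essentially the same observation in different language. The paper notes that the three quadratics $2w_1^2-w_1w_2$, $w_1w_2-w_1^2$, $w_2^2$ appearing as the $P_\beta$'s must all lie in $\operatorname{span}_{\mathbb C}(f_1,f_2)$, which has dimension at most $2$, while they are visibly linearly independent. Your Gram-matrix rank computation encodes the same linear independence: the rank-$3$ Hermitian coefficient matrix is precisely the obstruction to writing $P$ with only two squares. The paper's phrasing yields a bit more structural information about the $F_j$ along the way, while yours avoids the preparatory lemmas entirely.
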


Nevertheless, the answer to Question B in the special case where $m=1$ is ``yes'':

\theoremstyle{plain}
\newtheorem{theoremfolisinglefct}[propo]{Theorem}
\begin{theoremfolisinglefct}
	\label{theoremfolisinglefct}
	Let $P\colon\mathbb{C}^n\to\mathbb{R}$, $n\in\mathbb{Z}_{\geq 2}$, be a non-constant, homogeneous, plurisubharmonic polynomial without pluriharmonic terms. Assume that there exists a holomorphic function $G\colon\mathbb{C}^n\to\mathbb{C}$, nonsingular on a non-empty open set $U\subseteq\mathbb{C}^n$, such that $P$ is pluriharmonic along every level set of $G|_U$.\\
	Then there exist a homogeneous, subharmonic polynomial $s\colon\mathbb{C}\to\mathbb{R}$ without harmonic terms and a homogeneous holomorphic polynomial $h\colon\mathbb{C}^n\to\mathbb{C}$, such that $P=s\circ h$ on $\mathbb{C}^n$.
\end{theoremfolisinglefct}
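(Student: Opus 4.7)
My plan is to adapt the two-dimensional proof of Bharali and Stens{\o}nes (Result 2) by a pointwise analysis of the Levi form $\mathcal{L}_P:=i\partial\bar\partial P$, reducing the problem to the existence of a polynomial first integral of a rank-$1$ holomorphic foliation. First I would show that $\mathrm{rank}\,\mathcal{L}_P\le 1$ on all of $\mathbb{C}^n$: the nonsingular holomorphic function $G$ foliates $U$ by complex hypersurfaces whose $(1,0)$-tangent spaces equal $\ker(\partial G)$, and pluriharmonicity of $P$ along each leaf forces $\mathcal{L}_P$ to vanish on this $(n-1)$-dimensional kernel, so the rank is at most $1$ on $U$; since the $2\times 2$ minors of the polynomial-entry matrix $(P_{i\bar{j}})$ vanish on $U$, they vanish identically. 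Non-constancy of $P$ together with the absence of pluriharmonic terms ensures $\mathcal{L}_P\not\equiv 0$, so the rank equals $1$ on a non-empty open set $V\subseteq\mathbb{C}^n$.

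On $V\cap U$ one writes $\mathcal{L}_P=\tilde\alpha\,i\,dG\wedge d\bar G$ with $\tilde\alpha\ge 0$, and closedness $d\mathcal{L}_P=0$ forces $d\tilde\alpha\wedge dG\wedge d\bar G=0$, so $\tilde\alpha$ depends only on $(G,\bar G)$. Equivalently, each ratio $r_i:=P_{i\bar{1}}/P_{1\bar{1}}$ equals $G_i/G_1$ and is therefore holomorphic in $z$ alone; by the homogeneity of $P$, which makes every $P_{i\bar{j}}$ homogeneous of degree $d-2$ in $(z,\bar z)$, these ratios are moreover homogeneous of degree $0$ in $z$. Thus $r_i=g_i/g_1$ for homogeneous holomorphic polynomials $g_1,\dots,g_n$ of a common degree $k$, and the polynomial $(1,0)$-form $\omega:=\sum_i g_i\,dz_i$ defines the same codimension-one foliation as $\partial G$ on $V\cap U$.

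The central step is to produce a homogeneous holomorphic polynomial $h\colon\mathbb{C}^n\to\mathbb{C}$ with $dh=f\cdot\omega$ for some homogeneous polynomial $f$, i.e.\ a homogeneous polynomial first integral of the foliation $\omega=0$. Frobenius integrability $\omega\wedge d\omega=0$ is inherited from the integrability of $\ker(\partial G)$, and combining this with the homogeneity of the $g_i$ and with the polynomial origin of $\omega$ as the null-direction form of a polynomial rank-$1$ Hermitian form, I would construct such an $h$ (for instance by integrating a dual polynomial vector field along an affine hyperplane transverse to the foliation and extending by homogeneity, or as a fallback by restricting $P$ to generic complex $2$-planes $\Pi$, applying Result 2 fiberwise to get $P|_\Pi=s_\Pi\circ h_\Pi$, and gluing the primitive $h_\Pi$'s via their uniqueness up to scalar). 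Once $h$ is in hand, the identity $\mathcal{L}_P=\Phi(h,\bar h)\,i\,dh\wedge d\bar h$ holds on $V$ for some polynomial $\Phi$ in a single complex variable, and integrating $\Phi$ twice yields a homogeneous subharmonic polynomial $s\colon\mathbb{C}\to\mathbb{R}$ with $\partial^2 s/\partial t\partial\bar t=\Phi$. Then $\mathcal{L}_{P-s\circ h}\equiv 0$, so $P-s\circ h$ is a pluriharmonic polynomial; since $P$ has no pluriharmonic terms and any harmonic term $a\,t^k$ of $s$ would contribute only the pluriharmonic term $a\,h^k$ to $s\circ h$, we conclude $P=s\circ h$ on $\mathbb{C}^n$ with $s$ having no harmonic terms.

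The principal obstacle is the construction of $h$ as a polynomial, together with verifying that the induced $\Phi$ is polynomial and of the correct homogeneity: generic algebraic codimension-one foliations admit no rational first integral (Darboux-Jouanolou), so the argument must exploit essentially the homogeneity of the setup and the fact that the null foliation arises as $\ker(\mathcal{L}_P)$ for the polynomial $\mathcal{L}_P$. Identifying $h$ in this very constrained polynomial, homogeneous setting is the technical heart of the proof.
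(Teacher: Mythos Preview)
Your geometric strategy is natural, but the step you yourself flag as the ``principal obstacle'' is a genuine gap, not a routine technicality. Having the integrable rank-one form $\omega=\sum g_i\,dz_i$ does not, by itself, yield a \emph{polynomial} first integral $h$; the two methods you sketch (integrating a transverse vector field and extending by homogeneity, or restricting to $2$-planes and gluing the $h_\Pi$) are not developed enough to be convincing, and in particular the gluing of local primitives into a single homogeneous polynomial is exactly the kind of step where the Darboux--Jouanolou obstruction you mention can bite. A second softer gap is the passage from $\mathcal{L}_P=\tilde\alpha\,i\,dh\wedge d\bar h$ to $\tilde\alpha=\Phi(h,\bar h)$ with $\Phi$ a \emph{polynomial}: the closedness argument gives dependence on $(h,\bar h)$ only locally where $dh\neq 0$, and one still has to rule out branching of $\Phi$ over the critical set of $h$ and to pin down its homogeneity.

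The paper bypasses foliation theory entirely. It expands $P(z)=\sum_{\beta}\bar z^{\beta}P_\beta(z)$ with each $P_\beta$ a homogeneous \emph{holomorphic} polynomial, and from the pluriharmonicity along the leaves of $G$ deduces (Lemma~\ref{lemmaplurihalonglevelsets} with $m=1$) that $\partial_{z_l}P_\beta\,\partial_{z_j}G=\partial_{z_j}P_\beta\,\partial_{z_l}G$ for all $j,l,\beta$; replacing $G$ by the lowest-degree homogeneous part $q$ of its Taylor expansion and contracting with the Euler field gives $(\deg q)\,q\,\partial_{z_l}P_\beta=(\deg P_\beta)\,P_\beta\,\partial_{z_l}q$. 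The construction of $h$ is then a purely algebraic lemma (Lemma~\ref{lemmaalgebraic}): any two nonzero homogeneous polynomials related by this identity have the same prime factors, and comparing exponents forces every $P_\beta$ to be a scalar multiple of a power of the ``maximal root'' $h$ of $q$ (i.e.\ $q=h^{M_q}$ with $M_q$ largest possible). From $P_\beta=c_\beta\,h^{(2k-|\beta|)/\deg h}$ one reads off $s$ directly, without ever appealing to integrability or to the two-dimensional result. So the missing idea in your proposal is precisely this algebraic substitute for the first-integral construction: rather than integrating the foliation, one shows that each holomorphic coefficient $P_\beta$ already sees the right power of $h$ via a unique-factorization argument.
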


Note that Theorem \ref{theoremfolisinglefct} generalizes the upper mentioned result by Bharali, Stensønes \cite[Theorem 2]{MR2452636} to higher dimension.

Although the answer to Question C is ``no'', we get the following result, which says that the answer is ``yes, up to singular holomorphic coordinate changes'':

\theoremstyle{plain}
\newtheorem{theoremhomogseparately}[propo]{Theorem}
\begin{theoremhomogseparately}
	\label{theoremhomogseparately}
	Let $P\colon\mathbb{C}^n\to\mathbb{R}$, $n\in\mathbb{Z}_{\geq 2}$, be a non-constant plurisubharmonic polynomial without pluriharmonic terms and assume that $P$ is homogeneous of degree $2k$, $k\in\mathbb{Z}_{\geq 1}$. Let $1\leq l\leq n-1$ and assume that $P$ is homogeneous of degree $2d_j$, $d_j\in\mathbb{Z}_{>0}$, in $z_j$, $\overline{z_j}$ for $j=1,\dots ,l$ (see Def.\ \ref{notatiohomogseperately}). Assume furthermore that $k-D>0$, where $D=d_1 +\dots +d_l$. Write $d:=\gcd (d_1 ,\dots ,d_l ,k)\in\mathbb{Z}_{\geq 1}$.\\
	Then there exists a plurisubharmonic polynomial $Q\colon\mathbb{C}^{n-l}\to\mathbb{R}$ without pluriharmonic terms, homogeneous of degree $2k-2D$, with the property that both the holomorphic and the anti-holomorphic degree (see Section \ref{prelimsection}) of every term appearing in $Q$ are divisible by the integer $(k-D)/d$, such that we have for all $(z_1 ,\dots ,z_n)\in\mathbb{C}^n$:
	\begin{align*}
	P(z_1 ,\dots ,z_n )=Q(\tau z_{l+1},\dots ,\tau z_n )\text{,}
	\end{align*}
	for every solution $\tau\in\mathbb{C}$ of $\tau^{(k-D)/d}=z_1^{d_1/d}\cdots z_l^{d_l/d}$. 
\end{theoremhomogseparately}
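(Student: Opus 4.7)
The strategy is to reduce to the two-variable case by restricting $P$ to a family of complex $2$-planes and invoking the result of Bharali and Stens\o{}nes \cite[Theorem~3]{MR2452636} (the third Result stated in the introduction). The theorem will follow from a single structural claim at the monomial level: for every non-vanishing monomial $c_{\alpha,\beta}z^\alpha\overline{z}^\beta$ of $P$, writing $\gamma:=(\alpha_{l+1},\dots,\alpha_n)$ and $\delta:=(\beta_{l+1},\dots,\beta_n)$ for its tail multi-indices, there exist non-negative integers $A,B$ with $A+B=2d$ such that $\alpha_j=Ad_j/d$ and $\beta_j=Bd_j/d$ for all $j\in\{1,\dots,l\}$, and $|\gamma|=Ae$, $|\delta|=Be$, where $e:=(k-D)/d$.

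To prove the claim, I would fix each $i\in\{1,\dots,l\}$ in turn and consider the slice $\widetilde{P}_i(z_i,w):=P(z_1,\dots,z_n)$ obtained by setting $z_k=c_k\in\mathbb{C}^{*}$ for $k\in\{1,\dots,l\}\setminus\{i\}$ and $z_m=c_m w$ for $m>l$. This is a plurisubharmonic polynomial on $\mathbb{C}^2$ that is separately homogeneous of degree $2d_i$ in $(z_i,\overline{z_i})$ and of degree $2(k-D)$ in $(w,\overline{w})$ (the latter because every monomial of $P$ has fixed total tail degree $|\gamma|+|\delta|=2(k-D)$). After subtracting the possibly non-trivial pluriharmonic part, which in this bihomogeneous setting must be $2\Re$ of a multiple of $z_i^{2d_i}w^{2(k-D)}$, the remainder $H_i$ is PSH, bihomogeneous, and free of pluriharmonic terms. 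If $H_i\neq 0$ then \cite[Theorem~3]{MR2452636} forces $H_i(z_i,w)=s_i\bigl(z_i^{d_i/m_i}w^{(k-D)/m_i}\bigr)$ with $m_i=\gcd(d_i,k-D)$, yielding $\alpha_i(k-D)=d_i|\gamma|$ and $\beta_i(k-D)=d_i|\delta|$ on every monomial contributing to $H_i$; the pluriharmonic-in-slice monomials, whose $z_i$- and tail-exponents are necessarily at the extremes $(2d_i,0,2(k-D),0)$ or $(0,2d_i,0,2(k-D))$, satisfy these relations tautologically. A genericity argument in the $c_k$ (the coefficient in $\widetilde{P}_i$ at a fixed $(\alpha_i,\beta_i,|\gamma|,|\delta|)$-tuple is a nontrivial polynomial in the $c_k,\overline{c_k}$ unless the corresponding coefficients of $P$ all vanish identically) propagates these relations to every monomial of $P$. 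Running this for all $i\in\{1,\dots,l\}$ shows that the common ratio $r:=\alpha_i/d_i=|\gamma|/(k-D)$ is independent of $i$; as its denominator in lowest terms divides both $\gcd(d_1,\dots,d_l)$ and $k-D$, it divides $\gcd(d_1,\dots,d_l,k-D)=\gcd(d_1,\dots,d_l,k)=d$, which yields the integer $A$, and symmetrically the integer $B$.

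Granting the structural claim, set $Q(w_{l+1},\dots,w_n):=\sum_{(\gamma,\delta)}c_{(\gamma,\delta)}w^\gamma\overline{w}^\delta$, where the sum runs over the distinct tail multi-indices of monomials of $P$ (all satisfying $e\mid|\gamma|$ and $e\mid|\delta|$) and $c_{(\gamma,\delta)}$ is the coefficient of the unique corresponding monomial of $P$. A direct calculation verifies $Q(\tau z_{l+1},\dots,\tau z_n)=P(z)$ for every $\tau$ with $\tau^e=z_1^{d_1/d}\cdots z_l^{d_l/d}$, single-valuedness being ensured by the divisibility. Reality, homogeneity of degree $2(k-D)$, and the required divisibility of the holomorphic and antiholomorphic degrees of $Q$ are immediate from the construction, while the absence of pluriharmonic terms in $Q$ corresponds exactly to the absence in $P$ of monomials with $A=0$ or $B=0$. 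Plurisubharmonicity of $Q$ is obtained by choosing, on a simply connected open subset $U\subset\{z_1\cdots z_l\neq 0\}$, a single-valued holomorphic branch $\tau(z_1,\dots,z_l)$ of the root: the map $F(z):=(\tau z_{l+1},\dots,\tau z_n)$ is then a holomorphic submersion from $U\times\mathbb{C}^{n-l}$ onto $\mathbb{C}^{n-l}$, and the identity $P=Q\circ F$ forces $Q$ to inherit plurisubharmonicity via the local submersion form of $F$. The main obstacle is the bookkeeping in the slice argument — specifically, identifying the pluriharmonic-in-slice monomials that arise even though $P$ has none, and verifying that, thanks to bihomogeneity, they live exactly at the extreme exponents where the desired ratio relations already hold automatically.
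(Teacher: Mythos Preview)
Your approach is correct and genuinely different from the paper's. The paper does not reduce to the two-variable case; instead it proceeds directly from the Levi-form machinery developed in Section~\ref{prelimsection}. Specifically, Lemma~\ref{lemmahomogsepidentifyholmap} (proved via the averaging Lemma~\ref{lemmalevizeroaveraging}) shows that $P$ is pluriharmonic along the level sets of an explicit $G\colon\mathbb{C}^n\to\mathbb{C}^{n-l}$, and then the determinantal identity of Lemma~\ref{lemmaplurihalonglevelsets} is computed explicitly for this $G$ to obtain, for each $\beta$ with $P_\beta\not\equiv 0$ and each $\nu\le l$, the relation $(k-D)(2d_\nu-\beta_\nu)=d_\nu M_\beta$ with $M_\beta:=2k-2D-(\beta_{l+1}+\dots+\beta_n)$. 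This is exactly your structural claim, reached by PDE/linear-algebra means rather than by slicing.

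What each route buys: your argument is conceptually clean and makes the higher-dimensional statement a corollary of the $n=2$ result, at the cost of the genericity bookkeeping (which you handle correctly: distinct monomials of $P$ sharing the same $(\alpha_i,\beta_i,|\gamma|,|\delta|)$ produce distinct monomials in the $c_k,\overline{c_k}$, so no unwanted cancellation can persist for generic $c$). The paper's argument is self-contained---it does not invoke \cite[Theorem~3]{MR2452636}---and the Lemmas it sets up are reused for Theorem~\ref{theoremfolisinglefct} and Proposition~\ref{propositioncounterexample}, so there is an economy in developing that machinery once. One minor simplification you could adopt from the paper: plurisubharmonicity of $Q$ follows immediately from $Q(w_1,\dots,w_{n-l})=P(1,\dots,1,w_1,\dots,w_{n-l})$, so the submersion argument is unnecessary.
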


Alternatively, we can carry out a {\emph{singular}} holomorphic coordinate change $\Phi\colon\mathbb{C}^n\to\mathbb{C}^n$, $(z_1 ,\dots ,z_n )\mapsto ({z_1}^{(k-D)/d},\dots ,{z_l}^{(k-D)/d},z_{l+1},\dots ,z_n )$ and write
\begin{align*}
(P\circ\Phi )(z_1 ,\dots ,z_n )=Q(z_1^{d_1/d}\cdots z_l^{d_l/d}z_{l+1},\dots ,z_1^{d_1/d}\cdots z_l^{d_l/d}z_n )\text{.}
\end{align*}
Note that, without the assumptions $d_1 ,\dots ,d_l ,k-D>0$ in Theorem \ref{theoremhomogseparately}, $P$ is effectively a polynomial in fewer than $n$ variables, hence we can ignore that case.

As a corollary (of the proof) of Theorem \ref{theoremhomogseparately} we get that the answer to Question C in the special case where $P$ is homogeneous in all $n$ variables separately is ``yes'':

\theoremstyle{plain}
\newtheorem{corollaryhomogeveryvarseparately}[propo]{Corollary}
\begin{corollaryhomogeveryvarseparately}
	\label{corollaryhomogeveryvarseparately}
	Let $P\colon\mathbb{C}^n\to\mathbb{R}$, $n\in\mathbb{Z}_{\geq 2}$, be a non-constant, homogeneous, plurisubharmonic polynomial without pluriharmonic terms and assume that $P$ is homogeneous of degree $2d_j$, $d_j\in\mathbb{Z}_{>0}$, in $z_j$, $\overline{z_j}$ for $j=1,\dots ,n$ (see Def.\ \ref{notatiohomogseperately}). Write $d:=\gcd (d_1 ,\dots ,d_n )\in\mathbb{Z}_{\geq 1}$.\\
	Then there exists a homogeneous, subharmonic polynomial $s\colon\mathbb{C}\to\mathbb{R}$ without harmonic terms, such that
	\begin{align*}
	P(z_1 ,\dots ,z_n )=s({z_1}^{d_1 /d}\cdots {z_n}^{d_n /d})
	\end{align*}
	for all $(z_1 ,\dots ,z_n )\in\mathbb{C}^n$.
\end{corollaryhomogeveryvarseparately}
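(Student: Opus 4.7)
My plan is to apply Theorem \ref{theoremhomogseparately} with $l = n-1$ and then exploit its divisibility conclusion on the terms of the resulting one-variable polynomial to collapse that polynomial into a subharmonic polynomial of a single monomial. First I would observe that, since $P$ is homogeneous of degree $2d_j$ in $(z_j,\overline{z_j})$ for every $j$, its total degree is necessarily $2k = 2(d_1 + \dots + d_n)$; hence with $D := d_1 + \dots + d_{n-1}$ one has $k - D = d_n > 0$, so the hypothesis $k - D > 0$ of the theorem is satisfied. A short gcd computation gives $\gcd(d_1,\dots,d_{n-1},k) = \gcd(d_1,\dots,d_n) = d$, so the integer $d$ in the theorem matches the one in the corollary.

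Theorem \ref{theoremhomogseparately} then delivers a plurisubharmonic (hence subharmonic) polynomial $Q\colon\mathbb{C}\to\mathbb{R}$, without harmonic terms and homogeneous of degree $2d_n$, in which both degrees of every monomial are divisible by $d_n/d$, and such that
\[ P(z_1,\dots,z_n) = Q(\tau z_n) \]
whenever $\tau^{d_n/d} = z_1^{d_1/d}\cdots z_{n-1}^{d_{n-1}/d}$. Setting $u := w^{d_n/d}$, I would argue that the divisibility condition lets one rewrite every monomial of $Q$ as a monomial in $u$ and $\overline{u}$, producing $Q(w) = s(w^{d_n/d})$ for a uniquely determined polynomial $s\colon\mathbb{C}\to\mathbb{C}$, homogeneous of degree $2d$. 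Real-valuedness of $s$ and absence of harmonic terms are immediately inherited from $Q$ by matching coefficients. Subharmonicity of $s$ on $\mathbb{C}\setminus\{0\}$ would follow from the chain rule (the map $w\mapsto w^{d_n/d}$ being a local biholomorphism away from the origin and surjective onto $\mathbb{C}\setminus\{0\}$), and then extends to all of $\mathbb{C}$ since $s$ is a polynomial.

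Plugging any admissible $\tau$ into $P(z_1,\dots,z_n) = Q(\tau z_n)$ would then yield
\[ P(z_1,\dots,z_n) = s\bigl((\tau z_n)^{d_n/d}\bigr) = s\bigl(z_1^{d_1/d}\cdots z_n^{d_n/d}\bigr), \]
which is the claim. I expect the only non-routine point to be the passage from $Q$ to $s$---that is, recognizing that the divisibility statement in Theorem \ref{theoremhomogseparately} is exactly what is needed to realize $Q$ as a polynomial in the single monomial $w^{d_n/d}$ with real coefficients and without harmonic terms. Everything else is a direct verification once Theorem \ref{theoremhomogseparately} has been granted.
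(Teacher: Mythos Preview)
Your proposal is correct and follows essentially the same route the paper intends: the paper does not give a separate proof of the corollary, stating only that it is ``obvious from the proof of Theorem~\ref{theoremhomogseparately}'', and your argument is precisely the specialization $l=n-1$ of that theorem together with the observation that the divisibility condition on $Q$ forces $Q(w)=s(w^{d_n/d})$. Your verifications that $k-D=d_n>0$ and $\gcd(d_1,\dots,d_{n-1},k)=\gcd(d_1,\dots,d_n)$ are exactly what is needed, and the passage from $Q$ to $s$ (including subharmonicity via the local biholomorphism $w\mapsto w^{d_n/d}$) is the routine step the paper leaves implicit.
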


Note that Corollary \ref{corollaryhomogeveryvarseparately} generalizes the upper mentioned result by Bharali, Stensønes \cite[Theorem 3]{MR2452636} to higher dimension.

\section{Preliminaries}\label{prelimsection}
For the remainder of this section we fix an integer $n\geq 2$ and a non-constant polynomial $P\colon\mathbb{C}^n\to\mathbb{R}$ with the following properties:
\begin{itemize}
	\item{$P$ is $\mathbb{R}$-homogeneous of degree $2k$, for some positive integer $k$,}
	\item{$P$ is plurisubharmonic,}
	\item{$P$ does not have any pluriharmonic terms (i.e., purely holomorphic or purely anti-holomorphic terms).}
\end{itemize}
In particular, there exists a collection $(a_{{\alpha},{\beta}})_{({\alpha},{\beta})\in\mathcal{J}}$ of complex numbers where
\begin{itemize}
	\item $\mathcal{J}$ is the set of all pairs $({\alpha},{\beta})\in (\mathbb{Z}_{\geq 0})^n\times (\mathbb{Z}_{\geq 0})^n$ satisfying $|\alpha |>0$, $|\beta |>0$ and $|\alpha |+|\beta |=2k$,
	\item $\overline{a_{{\alpha},{\beta}}}=a_{{\beta},{\alpha}}$ for all $({\alpha},{\beta})\in\mathcal{J}$,
\end{itemize}
such that
\begin{align*}
P(z)=\sum_{({\alpha},{\beta})\in\mathcal{J}} a_{\alpha ,\beta}z^\alpha\overline{z}^\beta
\end{align*}
for all $z=(z_1 , \dots , z_n )\in\mathbb{C}^n$. Here we are making use of the usual multi-index notation: $|\alpha |=\alpha_1 +\dots +\alpha_n$ and $z^\alpha =z_1^{\alpha_1}\cdots z_n^{\alpha_n}$ (and analogously for $\beta$ and $\overline{z}^\beta$). If $a_{\alpha ,\beta}\neq 0$, then we say that $|\alpha |$ (resp.\ $|\beta |$) is the holomorphic (resp.\ anti-holomorphic) degree of the term $a_{\alpha ,\beta}z^\alpha\overline{z}^\beta$.

Furthermore, let $\mathcal{L}(P;p,V)$ denote the Levi form of $P$ at the point $p\in\mathbb{C}^n$ in direction $V=(V_1 ,\dots ,V_n )^t\in\mathbb{C}^n$, i.e.,
\begin{align*}
\mathcal{L}(P;p,V)=
(V_1,\dots ,V_n )
\begin{pmatrix}
\frac{\partial^2 P}{\partial{z_1}\partial\overline{z_1}}(p) & \dots & \frac{\partial^2 P}{\partial{z_1}\partial\overline{z_n}}(p) \\
\vdots & \ddots & \vdots \\
\frac{\partial^2 P}{\partial{z_n}\partial\overline{z_1}}(p) & \dots & \frac{\partial^2 P}{\partial{z_n}\partial\overline{z_n}}(p)
\end{pmatrix}
\begin{pmatrix}
\overline{V_1} \\
\vdots \\
\overline{V_n}
\end{pmatrix}\text{.}
\end{align*}

\theoremstyle{plain}
\newtheorem{lemmalevizeroaveraging}[propo]{Lemma}
\begin{lemmalevizeroaveraging}
\label{lemmalevizeroaveraging}
Let $\mathcal{A}=\{\alpha\in (\mathbb{Z}_{\geq 0})^n\colon |\alpha |=k\text{ and }a_{\alpha ,\alpha}\neq 0\}$ and let
\begin{align*}
\mathcal{C}=\left\{(c_1 ,\dots ,c_n )\in\mathbb{C}^n\colon \sum_{j=1}^{n}\alpha_j c_j =0\text{ for all }\alpha\in\mathcal{A}\right\}\text{.}
\end{align*}
Then we have for all $(c_1 ,\dots ,c_n )\in\mathcal{C}$ and for all $z=(z_1 ,\dots ,z_n )\in\mathbb{C}^n$:
\begin{align*}
\mathcal{L}(P;z,(c_1 z_1 ,\dots ,c_n z_n )^t)=0\text{.}
\end{align*}
\end{lemmalevizeroaveraging}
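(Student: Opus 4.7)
The plan is to recognize that the quantity $\mathcal{L}(P;z,(c_1 z_1,\dots,c_n z_n)^t)$ is a non-negative polynomial in $z,\bar z$ (plurisubharmonicity of $P$) and, as the name of the lemma suggests, to average it over the standard $n$-torus action on $\mathbb{C}^n$. The average picks out only the diagonal monomials $|z^\alpha|^2$ of $P$, and the defining condition of $\mathcal{C}$ will make the averaged expression vanish identically; non-negativity will then force the original expression to vanish as well.

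A direct computation starting from $P(z)=\sum_{(\alpha,\beta)\in\mathcal{J}} a_{\alpha,\beta}z^\alpha\bar z^\beta$ and using the identity $z_i\,(\partial z^\alpha/\partial z_i) = \alpha_i z^\alpha$ (valid, with both sides zero, also when $\alpha_i=0$) would yield
\begin{align*}
f(z) := \mathcal{L}(P;z,(c_1 z_1,\dots,c_n z_n)^t) = \sum_{(\alpha,\beta)\in\mathcal{J}} a_{\alpha,\beta}\,(c\cdot\alpha)\,\overline{(c\cdot\beta)}\,z^\alpha\bar z^\beta,
\end{align*}
where $c\cdot\gamma := \sum_{i=1}^n c_i \gamma_i$. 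Plurisubharmonicity of $P$ gives $f\geq 0$ pointwise, and hence $f(e^{i\theta}\cdot z)\geq 0$ for every $\theta\in\mathbb{R}^n$ and $z\in\mathbb{C}^n$, where $e^{i\theta}\cdot z := (e^{i\theta_1}z_1,\dots,e^{i\theta_n}z_n)$.

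Averaging over the torus and using $(2\pi)^{-1}\int_0^{2\pi}e^{im\theta}\,d\theta = \delta_{m,0}$ kills every term of $f$ with $\alpha\neq\beta$; the surviving terms have $\alpha=\beta$, hence $|\alpha|=k$, yielding
\begin{align*}
\tilde f(z) := \frac{1}{(2\pi)^n}\int_{[0,2\pi]^n} f(e^{i\theta}\cdot z)\,d\theta = \sum_{\alpha\in\mathcal{A}} a_{\alpha,\alpha}\,|c\cdot\alpha|^2\,|z^\alpha|^2.
\end{align*}

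If $c\in\mathcal{C}$, then by the definition of $\mathcal{C}$ each factor $c\cdot\alpha$ in the sum vanishes, so $\tilde f\equiv 0$. But $\tilde f(z)$ is the average of the continuous non-negative function $\theta\mapsto f(e^{i\theta}\cdot z)$, so it can be zero only if this function is identically zero; specializing to $\theta=0$ yields $f(z)=0$, which is the claim. No serious obstacle is anticipated: the only point requiring care is the multi-index bookkeeping in the computation of $f$, which is routine because the factors $\alpha_i$ (resp.\ $\beta_j$) automatically annihilate the edge cases $\alpha_i=0$ (resp.\ $\beta_j=0$).
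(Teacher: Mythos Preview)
Your proposal is correct and follows essentially the same approach as the paper: both compute the Levi form explicitly as $\sum_{(\alpha,\beta)\in\mathcal{J}} a_{\alpha,\beta}(c\cdot\alpha)\overline{(c\cdot\beta)}\,z^\alpha\bar z^\beta$, average over the torus to isolate the diagonal terms, and use non-negativity together with the defining condition of $\mathcal{C}$ to conclude. The only cosmetic difference is that the paper phrases the final step as a proof by contradiction, whereas you argue directly that a non-negative continuous function with zero average vanishes identically.
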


\begin{proof}
For all $z=(z_1 ,\dots ,z_n )\in\mathbb{C}^n$, $(c_1 ,\dots ,c_n )\in\mathbb{C}^n$ a straightforward calculation shows that
\begin{align*}
\mathcal{L}(P;z,(c_1 z_1 ,\dots ,c_n z_n )^t)
=\sum_{(\alpha ,\beta )\in\mathcal{J}}a_{\alpha ,\beta}\cdot \Bigg({\sum_{j=1}^{n} \alpha_j c_j}\Bigg)\cdot\overline{\Bigg({\sum_{j=1}^{n}\beta_j c_j}\Bigg)}\cdot z^\alpha \overline{z}^\beta\text{.}
\end{align*}
Assume for the sake of a contradiction that the claim is wrong. We then find some $(c_1 ,\dots ,c_n )\in\mathcal{C}$ and some $r_1 ,\dots ,r_n\in\mathbb{R}_{\geq 0}$, $\phi_1 ,\dots ,\phi_n\in [0,2\pi )$, such that:
\begin{align*}
\mathcal{L}(P;(r_1 e^{i\phi_1},\dots ,r_n e^{i\phi_n}),(c_1 r_1 e^{i\phi_1},\dots ,c_n r_n e^{i\phi_n})^t)\neq 0\text{.}
\end{align*}
By continuity, and since $P$ is plurisubharmonic, we then get:
\begin{align*}
0 & <\int_{0}^{2\pi}\dots\int_{0}^{2\pi}\mathcal{L}(P;(r_1 e^{i\theta_1},\dots ,r_n e^{i\theta_n}),(c_1 r_1 e^{i\theta_1},\dots ,c_n r_n e^{i\theta_n})^t) d\theta_1\dots d\theta_n\\
& =\sum_{(\alpha ,\beta )\in\mathcal{J}}a_{\alpha ,\beta}\cdot \Bigg({\sum_{j=1}^{n} \alpha_j c_j}\Bigg)\cdot\overline{\Bigg({\sum_{j=1}^{n}\beta_j c_j}\Bigg)}\cdot {r_1}^{\alpha_1 +\beta_1}\cdots{r_n}^{\alpha_n +\beta_n}\\
& \phantom{=\sum_{(\alpha ,\beta )\in\mathcal{J}}a_{\alpha ,\beta}}\cdot \left(\int_{0}^{2\pi} e^{i(\alpha_1 -\beta_1 )\theta_1} d\theta_1\right)\cdots\left(\int_{0}^{2\pi} e^{i(\alpha_n -\beta_n )\theta_n} d\theta_n\right)\\
& =(2\pi )^n\sum_{\alpha\in\mathcal{A}}a_{\alpha ,\alpha}\cdot \Bigg|{\sum_{j=1}^{n} \alpha_j c_j}\Bigg|^2 \cdot {r_1}^{\alpha_1 +\alpha_1}\cdots{r_n}^{\alpha_n +\alpha_n}\\
& =0\text{,}
\end{align*}
where the last equality is due to the fact that $(c_1 ,\dots ,c_n )\in\mathcal{C}$. We have arrived at the desired contradiction; the claim follows.
\end{proof}

For all $\beta\in (\mathbb{Z}_{\geq 0})^n$ with $1\leq |\beta |\leq 2k-1$ we define a homogeneous holomorphic polynomial $P_{\beta}\colon\mathbb{C}^n\to\mathbb{C}$,
\begin{align*}
P_{\beta}(z)=\sum_{\alpha\colon |\alpha |=2k-|\beta |}a_{\alpha ,\beta}z^{\alpha}\text{.}
\end{align*}
In particular we can write
\begin{align*}
P(z)=\sum_{\beta\colon 1\leq |\beta |\leq 2k-1}{\overline{z}^{\beta}P_{\beta}(z)}\text{.}
\end{align*}

\theoremstyle{plain}
\newtheorem{lemmaplurihalonglevelsets}[propo]{Lemma}
\begin{lemmaplurihalonglevelsets}
	\label{lemmaplurihalonglevelsets}
	Assume that there exists a holomorphic map $G\colon\mathbb{C}^n\to\mathbb{C}^m$, $1\leq m\leq n-1$, nonsingular on a non-empty open set $U\subseteq\mathbb{C}^n$, such that $P$ is pluriharmonic along every level set of $G|_U$.\\
	Then, for all $i_1 ,\dots ,i_m,L\in\{1,\dots ,n\} $ (not necessarily pairwise distinct) and for all $\beta\in (\mathbb{Z}_{\geq 0})^n$ with $1\leq |\beta |\leq 2k-1$, the following equality holds on $\mathbb{C}^n$:
	\begin{align*}
	\det
	\begin{pmatrix}
	\frac{\partial G_1}{\partial z_{i_1}} & \dots  & \frac{\partial G_1}{\partial z_{i_m}} \\
	\vdots &  \ddots & \vdots \\
	\frac{\partial G_m}{\partial z_{i_1}} & \dots  & \frac{\partial G_m}{\partial z_{i_m}}
	\end{pmatrix}\cdot \frac{\partial P_{\beta}}{\partial z_L}
	=\sum_{j=1}^{m}\det
	\begin{pmatrix}
	\frac{\partial G_1}{\partial z_{i_1}} & \dots  & \frac{\partial G_1}{\partial z_{i_m}} \\
	\vdots &  \ddots & \vdots \\
	\frac{\partial G_{j-1}}{\partial z_{i_1}} & \dots  & \frac{\partial G_{j-1}}{\partial z_{i_m}} \\
	\frac{\partial P_{\beta}}{\partial z_{i_1}} & \dots  & \frac{\partial P_{\beta}}{\partial z_{i_m}} \\
	\frac{\partial G_{j+1}}{\partial z_{i_1}} & \dots  & \frac{\partial G_{j+1}}{\partial z_{i_m}} \\
	\vdots &  \ddots & \vdots \\
	\frac{\partial G_m}{\partial z_{i_1}} & \dots  & \frac{\partial G_m}{\partial z_{i_m}}
	\end{pmatrix}\cdot \frac{\partial G_j}{\partial z_L}\text{.}
	\end{align*}
\end{lemmaplurihalonglevelsets}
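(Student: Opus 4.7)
First, I would reformulate the lemma. Expanding the $(m+1)\times(m+1)$ determinant whose rows are the partial derivatives of $G_1,\dots,G_m,P_\beta$ with respect to $z_{i_1},\dots,z_{i_m},z_L$ along its last column, and matching the result to the lemma's notation $\det M$ and $\det M_j$ (with care for signs), shows that the claimed equality is equivalent to the vanishing of this $(m+1)\times(m+1)$ determinant. Requiring this for all choices of $i_1,\dots,i_m,L$ is equivalent to the polynomial $(m+1,0)$-form $dP_\beta\wedge dG_1\wedge\cdots\wedge dG_m$ vanishing identically on $\mathbb{C}^n$. By polynomiality it suffices to check this on $U$, where $DG$ has rank $m$; there, the identity reduces to showing $\sum_L V_L(\partial P_\beta/\partial z_L)(z)=0$ for every $z\in U$ and every $V\in\ker DG(z)$.

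The decisive step is a Cauchy--Schwarz argument. For $z\in U$ and $V\in\ker DG(z)$, the vector $V$ is tangent to the level set $G^{-1}(G(z))$, so pluriharmonicity of $P$ along this level set yields $\mathcal{L}(P;z,V)=0$. Plurisubharmonicity makes the Hermitian form on $\mathbb{C}^n$ defined by the Levi matrix positive semidefinite, and Cauchy--Schwarz then forces $V$ into the kernel of that matrix, i.e.\
\[
\sum_L \frac{\partial^2 P}{\partial z_L\partial \overline{z_M}}(z)\,V_L=0\quad\text{for every }M=1,\dots,n.
\]
Substituting the expansion $\partial^2 P/(\partial z_L\partial \overline{z_M}) = \sum_\beta \beta_M\overline{z}^{\beta-e_M}\partial P_\beta/\partial z_L$ rewrites this as
\[
\sum_\beta \beta_M\,\overline{z}^{\beta-e_M}\Bigl(\sum_L V_L\frac{\partial P_\beta}{\partial z_L}(z)\Bigr)=0\quad\text{for every }M.
\]

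To finish, I would fix $z_0\in U$. After permuting coordinates so that some $m\times m$ minor of $DG$ is nonzero at $z_0$, Cramer's rule furnishes local holomorphic sections $V^{(1)}(z),\dots,V^{(n-m)}(z)$ spanning $\ker DG(z)$ on some neighborhood of $z_0$ contained in $U$. Substituting $V=V^{(j)}(z)$ into the previous display gives, for each $j$ and each $M$, an identity of the form $\sum_\gamma a^{(j,M)}_\gamma(z)\overline{z}^\gamma=0$ on that neighborhood, where each $a^{(j,M)}_\gamma$ is holomorphic in $z$. Expanding each $a^{(j,M)}_\gamma$ as a power series in $z$ and invoking the linear independence of the monomials $\{z^\alpha\overline{z}^\gamma\}$ as real-analytic functions on any open subset of $\mathbb{C}^n$ forces every $a^{(j,M)}_\gamma$ to vanish. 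Translating this back yields $\sum_L V^{(j)}_L(z)(\partial P_\beta/\partial z_L)(z)=0$ for every $\beta$ with some $\beta_M\geq 1$, i.e.\ for every $\beta$ with $1\leq |\beta|\leq 2k-1$, and since the $V^{(j)}$ span $\ker DG$, this gives the required vanishing for every $V\in\ker DG(z)$ and every $z$ near $z_0$, hence on $U$.

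The main obstacle I anticipate is the final ``freeing $\overline{z}$'' step: one must distinguish between the identity holding on the real slice $\{(z,\overline{z}):z\in U\}\subset\mathbb{C}^{2n}$ and its validity as a polynomial identity in two independent variables. The linear independence of monomials $\{z^\alpha\overline{z}^\gamma\}$ as real-analytic functions bridges this gap, letting one read off each coefficient as a holomorphic function of $z$ and conclude it vanishes separately. The remaining ingredients --- the Cauchy--Schwarz consequence of positive semidefiniteness, the local holomorphic sections of $\ker DG$ via Cramer's rule, and the determinantal bookkeeping --- are routine.
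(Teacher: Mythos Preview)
Your proposal is correct and follows essentially the same route as the paper: positive semidefiniteness upgrades $\mathcal{L}(P;z,V)=0$ to $V\in\ker$ of the complex Hessian, local holomorphic sections of $\ker DG$ together with separation of holomorphic and antiholomorphic variables give $V\in\ker P_\beta'(z)$ for each $\beta$, and then the $(m+1)\times(m+1)$ minors vanish on $U$ and hence (by the identity theorem---note $G$ is only holomorphic, not polynomial, so ``by polynomiality'' should read ``by analytic continuation'') on all of $\mathbb{C}^n$. The only cosmetic difference is that the paper multiplies the $M$-th kernel equation by $\overline{z_M}$ and sums over $M$ to obtain a single identity $\sum_\beta |\beta|\,\overline{z}^\beta(\cdots)=0$ before separating coefficients, whereas you handle each $M$ separately and then observe that every $\beta$ with $|\beta|\geq 1$ has some $\beta_M\geq 1$.
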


\begin{proof}
Fix a point $p\in U$. Then there exist an open neighborhood $W\subseteq U$ of $p$ and holomorphic maps $K_1 ,\dots ,K_{n-m}\colon W\to\mathbb{C}^n\setminus\{0\}$, such that $\{K_1 (z),\dots ,K_{n-m}(z)\}$ is a basis for the null space of $G'(z)\in\mathbb{C}^{m\times n}$ for all $z\in W$. For $l=1,\dots ,n-m$, denote the component functions of $K_l$ as ${K_l}^{(1)},\dots ,{K_l}^{(n)}$. Since $P$ is pluriharmonic along every level set of $G|_U$, we get
\begin{align*}
0=({K_l}^{(1)},\dots ,{K_l}^{(n)})
\begin{pmatrix}
\frac{\partial^2 P}{\partial{z_1}\partial\overline{z_1}} & \dots & \frac{\partial^2 P}{\partial{z_1}\partial\overline{z_n}} \\
\vdots & \ddots & \vdots \\
\frac{\partial^2 P}{\partial{z_n}\partial\overline{z_1}} & \dots & \frac{\partial^2 P}{\partial{z_n}\partial\overline{z_n}}
\end{pmatrix}
\begin{pmatrix}
\overline{{K_l}^{(1)}} \\
\vdots \\
\overline{{K_l}^{(n)}}
\end{pmatrix}
\end{align*}
on $W$ for $l=1,\dots ,n-m$; but since the Complex Hessian matrix of $P$ is positive semidefinite, we even get
\begin{align*}
(0,\dots ,0)=({K_l}^{(1)},\dots ,{K_l}^{(n)})
\begin{pmatrix}
\frac{\partial^2 P}{\partial{z_1}\partial\overline{z_1}} & \dots & \frac{\partial^2 P}{\partial{z_1}\partial\overline{z_n}} \\
\vdots & \ddots & \vdots \\
\frac{\partial^2 P}{\partial{z_n}\partial\overline{z_1}} & \dots & \frac{\partial^2 P}{\partial{z_n}\partial\overline{z_n}}
\end{pmatrix}\text{.}
\end{align*}
Writing
\begin{align*}
P(z)=\sum_{\beta\colon 1\leq |\beta |\leq 2k-1}{\overline{z}^{\beta}P_{\beta}(z)}\text{,}
\end{align*}
as above, we get for $I=1,\dots ,n$:
\begin{align*}
0 & = \sum_{J=1}^{n}{K_l}^{(J)}(z)\cdot\frac{{\partial}^2 P}{\partial z_J\partial\overline{z_I}}(z)\\
& =\sum_{\beta\colon 1\leq |\beta |\leq 2k-1}{\left(\beta_I\overline{z_1}^{\beta_1}\cdots\overline{z_I}^{\beta_I -1}\cdots\overline{z_n}^{\beta_n}\cdot\sum_{J=1}^{n}{K_l}^{(J)}(z)\frac{\partial P_{\beta}}{\partial z_J}(z)\right)}
\end{align*}
for all $z\in W$, $l=1,\dots ,n-m$; hence
\begin{align*}
0 & =\sum_{I=1}^{n}\overline{z_I}\cdot 0\\
& =\sum_{\beta\colon 1\leq |\beta |\leq 2k-1}{\left( |\beta |\cdot\overline{z}^{\beta}\cdot\sum_{J=1}^{n}{K_l}^{(J)}(z)\frac{\partial P_{\beta}}{\partial z_J}(z)\right)}\text{.}
\end{align*}
Owing to the fact that the ${K_l}^{(J)}$ and the $P_\beta$ are holomorphic, we then get for all $\beta\in ({\mathbb{Z}_{\geq 0}})^n$ with $1\leq |\beta |\leq 2k-1$:
\begin{align*}
0=\sum_{J=1}^{n}{K_l}^{(J)}(z)\frac{\partial P_{\beta}}{\partial z_J}(z)\text{ for all }z\in W, l\in\{1,\dots ,n-m\}\text{,}
\end{align*}
i.e., $K_l(z)=({K_l}^{(1)}(z),\dots ,{K_l}^{(n)}(z))^t$ is in the null space of the matrix $P_{\beta}'(z)\in\mathbb{C}^{1\times n}$. But this implies that the null space of the matrix
\begin{align*}
\begin{pmatrix}
\frac{\partial G_1}{\partial z_1}(z) & \dots  & \frac{\partial G_1}{\partial z_n}(z) \\
\vdots &  \ddots & \vdots \\
\frac{\partial G_m}{\partial z_1}(z) & \dots  & \frac{\partial G_m}{\partial z_n}(z) \\
\frac{\partial P_\beta}{\partial z_1}(z) & \dots  & \frac{\partial P_\beta}{\partial z_n}(z)
\end{pmatrix}\in\mathbb{C}^{(m+1)\times n}
,z\in W\text{,}
\end{align*}
is $(n-m)$-dimensional for all $\beta$ with $1\leq |\beta |\leq 2k-1$, so the rank of said matrix is $m$. Hence, given $i_1 ,\dots ,i_m,L\in\{1,\dots ,n\}$, we have
\begin{align*}
0=\det
\begin{pmatrix}
\frac{\partial G_1}{\partial z_{i_1}} & \dots  & \frac{\partial G_1}{\partial z_{i_m}} & \frac{\partial G_1}{\partial z_L} \\
\vdots &  \ddots & \vdots & \vdots \\
\frac{\partial G_m}{\partial z_{i_1}} & \dots  & \frac{\partial G_m}{\partial z_{i_m}} & \frac{\partial G_m}{\partial z_L} \\
\frac{\partial P_\beta}{\partial z_{i_1}} & \dots  & \frac{\partial P_\beta}{\partial z_{i_m}} & \frac{\partial P_\beta}{\partial z_L}
\end{pmatrix}
\end{align*}
on $W$ for all $\beta$ with $1\leq |\beta |\leq 2k-1$; noting that all the entries of the latter matrix are holomorphic on $\mathbb{C}^n$, the identity theorem gives that the determinant vanishes on all of $\mathbb{C}^n$. The claim follows by Laplace expanding by the last column and calculating.
\end{proof}

\theoremstyle{definition}
\newtheorem{notatiohomogseperately}[propo]{Definition}
\begin{notatiohomogseperately}
	\label{notatiohomogseperately}
	Given $l\in\{1,\dots ,n\}$, we say that $P$ is {\emph{homogeneous in}} $l$ {\emph{variables separately}}, provided there exist integers $1\leq i_1 <\dots <i_l\leq n$ and integers $d_{i_1},\dots ,d_{i_l}\geq 1$, such that for every $({\alpha},{\beta})\in\mathcal{J}$ with ${a_{{\alpha},{\beta}}}\neq 0$ we have $\alpha_{i_1}+\beta_{i_1}=2d_{i_1},\dots , \alpha_{i_l}+\beta_{i_l}=2d_{i_l}$. In this case we say that $P$ is homogeneous of degree $2d_{i_j}$ in $z_{i_j},\overline{z_{i_j}}$ for all $j\in\{1,\dots ,l\}$.
\end{notatiohomogseperately}

\theoremstyle{remark}
\newtheorem{notehomogallvarsep}[propo]{Note}
\begin{notehomogallvarsep}
	\label{notehomogallvarsep}
	We restrict attention to even degrees in Definition \ref{notatiohomogseperately} due to the plurisubharmonicity requirement. Note furthermore that, in the case $l=n-1$, the polynomial $P$ is necessarily homogeneous in all $n$ variables separately, since $P$ is homogeneous.
\end{notehomogallvarsep}

\theoremstyle{plain}
\newtheorem{lemmahomogsepidentifyholmap}[propo]{Lemma}
\begin{lemmahomogsepidentifyholmap}
\label{lemmahomogsepidentifyholmap}
Let $1\leq l\leq n-1$ and assume that $P$ is homogeneous of degree $2d_j$, $d_j\in\mathbb{Z}_{>0}$, in $z_j$, $\overline{z_j}$ for $j=1,\dots ,l$. Assume furthermore that $k-D>0$, where $D=d_1 +\dots +d_l$.\\
Then, away from the coordinate hyperplanes, $P$ is pluriharmonic along the level sets of $G\colon\mathbb{C}^n\to\mathbb{C}^{n-l}$,
\begin{align*}
G(z_1 ,\dots ,z_n )={z_1}^{d_1}\cdots{z_l}^{d_l}\cdot \left({z_{l+1}}^{k-D},{z_{l+2}}^{k-D},\dots ,{z_n}^{k-D}\right)\text{.}
\end{align*}
\end{lemmahomogsepidentifyholmap}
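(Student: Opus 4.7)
The plan is to identify the null space of $G'(z)$ at points $z$ with all coordinates nonzero and to recognize that the resulting tangent vectors fall into the framework of Lemma \ref{lemmalevizeroaveraging}.

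First I would compute the Jacobian. Writing $G_i(z) = {z_1}^{d_1}\cdots {z_l}^{d_l}\cdot{z_{l+i}}^{k-D}$ for $i = 1, \dots, n-l$, one reads off
\[
\frac{\partial G_i}{\partial z_j} = \frac{d_j}{z_j}G_i(z)\ (j\leq l), \qquad \frac{\partial G_i}{\partial z_{l+i}} = \frac{k-D}{z_{l+i}}G_i(z),
\]
with all remaining entries zero. Away from the coordinate hyperplanes each $G_i(z)$ is nonzero, so after dividing out $G_i(z)$ from row $i$ the kernel condition for a vector $v = (v_1 , \dots , v_n)$ becomes
\[
\sum_{j=1}^{l} \frac{d_j}{z_j}v_j + \frac{k-D}{z_{l+i}}v_{l+i} = 0 \qquad (i = 1, \dots , n-l).
\]
Setting $v_j = c_j z_j$ (which is possible since all $z_j \neq 0$) turns this into $(k-D)c_{l+i} = -\sum_{j=1}^{l} d_j c_j$, forcing $c_{l+1} = \dots = c_n =: c$ with $c$ determined by the free parameters $c_1, \dots, c_l$. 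This exhibits the entire $l$-dimensional kernel.

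Next I would verify that the tuple $(c_1, \dots, c_n)$ just produced lies in $\mathcal{C}$. Since $P$ is homogeneous of degree $2d_j$ in $z_j, \overline{z_j}$, every $\alpha \in \mathcal{A}$ satisfies $\alpha_j = d_j$ for $j \in \{1, \dots, l\}$, and combined with $|\alpha| = k$ this gives $\alpha_{l+1} + \dots + \alpha_n = k-D$. Hence
\[
\sum_{j=1}^{n} \alpha_j c_j = \sum_{j=1}^{l} d_j c_j + c(k-D) = 0
\]
by the defining relation for $c$. Lemma \ref{lemmalevizeroaveraging} then yields $\mathcal{L}(P; z, v) = 0$ for every $v$ in the kernel of $G'(z)$.

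To close the argument, I would observe that the complex Hessian of $P$ is positive semidefinite, so the Cauchy--Schwarz inequality for positive semidefinite Hermitian forms implies that vanishing of the Levi form on a subspace forces the complex Hessian itself to vanish on that subspace. Since the kernel of $G'(z)$ is precisely the holomorphic tangent space of the level set of $G$ through $z$, pulling back by a local holomorphic parametrization of this level set shows that $P$ restricted to the level set has identically zero complex Hessian, i.e.\ is pluriharmonic. I do not anticipate a serious obstacle: the entire argument is a direct computation, and the only conceptual step is recognizing that the radial scaling $v_j = c_j z_j$ parameterizes the kernel of $G'(z)$ in exactly the form demanded by Lemma \ref{lemmalevizeroaveraging}.
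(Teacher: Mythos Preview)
Your proposal is correct and follows essentially the same approach as the paper: both compute the kernel of $G'(z)$ away from the coordinate hyperplanes, express every kernel vector in the form $(c_1 z_1,\dots ,c_n z_n)$, verify that the resulting tuple $(c_1,\dots ,c_n)$ lies in $\mathcal{C}$ using the separate homogeneity of $P$, and then invoke Lemma~\ref{lemmalevizeroaveraging}. The only cosmetic difference is that the paper works with an explicit basis of the kernel (one vector for each $j\in\{1,\dots ,l\}$) while you parameterize the entire kernel at once; your closing paragraph on passing from vanishing Levi form to pluriharmonicity makes explicit what the paper leaves implicit.
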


\begin{proof}
$G$ is nonsingular on $U:=\{(z_1 ,\dots ,z_n )\in\mathbb{C}^n\colon z_1\neq 0,\dots ,z_n\neq 0 \}$. Given $z\in U$, we have to show that $\mathcal{L}(P;z,V)=0$ for all $V$ in the null space of $G'(z)\in\mathbb{C}^{(n-l)\times n}$. But since the Complex Hessian matrix of $P$ is positive semidefinite, it suffices to verify this for a {\emph{basis}} of said null space. If $z\in U$, then the collection of vectors\begin{align*}
\begin{blockarray}{cc}
\begin{block}{(c)c}
0 &  \\
\vdots &  \\
0 &  \\
(k-D)z_j & j\text{-th entry} \\
0 &  \\
\vdots &  \\
0 &  \\
-d_j z_{l+1} & (l+1)\text{-th entry} \\
\vdots &  \\
-d_j z_{n} &  \\
\end{block}
\end{blockarray}\text{, where }j\in\{1,\dots ,l\}
\end{align*}
forms a basis for the null space of $G'(z)\in\mathbb{C}^{(n-l)\times n}$. Hence, with $\mathcal{A}$ and $\mathcal{C}$ as in Lemma \ref{lemmalevizeroaveraging}, it suffices to show that $C_j\in\mathcal{C}$ for $j\in\{1,\dots ,l\}$, where
\begin{align*}
\begin{blockarray}{cccccccccccc}
\begin{block}{c(cccccccccc)c}
C_j:= & 0, & \dots , & 0, & k-D, & 0, & \dots , & 0, & -d_j , & \dots , & -d_j & \in\mathbb{C}^n\text{.}\\
\end{block}
\begin{block}{cccccccccccc}
& & & & j\text{-th} & & & & (l+1)\text{-th} & & &\\
\end{block}
\end{blockarray}
\end{align*}
To this end we consider some $j\in\{1,\dots ,l\}$ and some $\alpha\in\mathcal{A}$. Since $a_{\alpha ,\alpha}\neq 0$ and by assumption on $P$ we then have $\alpha_1 =d_1 ,\dots ,\alpha_l =d_l$, and $\alpha_{l+1}+\dots +\alpha_n =k-D$. Writing $C_j=:({c_1}^{(j)} ,\dots ,{c_n}^{(j)} )$, we then have:
\begin{align*}
\sum_{s=1}^{n} \alpha_s {c_s}^{(j)} =(k-D)\alpha_j -d_j (\alpha_{l+1}+\dots +\alpha_n )=0\text{,}
\end{align*}
as desired.
\end{proof}

\section{Proof of Proposition \ref{theoremfinmanyhyperplanes}}\label{hyperplanesection}

\begin{proof}[Proof of Proposition \ref{theoremfinmanyhyperplanes}]
We proceed by induction on the dimension $n$. The $2$-dimensional case was handled by Bharali, Stensønes \cite{MR2452636}, so let $n\geq 3$ and assume the claim holds in dimensions $2,\dots ,n-1$. Let $P$ be as in the statement of the theorem and assume for the sake of a contradiction that $P$ is pluriharmonic along infinitely many complex hyperplanes through $0\in\mathbb{C}^n$. We then find a sequence $(\mathcal{H}_j )_{j\in\mathbb{Z}_{\geq 1}}$ of {\emph{pairwise distinct}} such hyperplanes. It is furthermore easy to see that there exists a complex hyperplane $A$ through $0\in\mathbb{C}^n$, such that $P$ is {\emph{not}} pluriharmonic along $A$. Since $P$ is pluriharmonic along each $\mathcal{H}_j$, $j\in\mathbb{Z}_{\geq 1}$, we get that $P$ is pluriharmonic along $A\cap\mathcal{H}_j$ for all $j\in\mathbb{Z}_{\geq 1}$. Hence, by induction, the set $\{A\cap\mathcal{H}_j \colon j\in\mathbb{Z}_{\geq 1} \}$ is finite. So, there exists an $(n-2)$-dimensional complex vector subspace $V$ of $A$, such that $A\cap\mathcal{H}_j = V$ for {\emph{infinitely many}} $j\in\mathbb{Z}_{\geq 1}$. Thus, after deleting some members of the sequence if necessary, we can assume that
\begin{align*}
A\cap\mathcal{H}_j = V\text{ for all }j\in\mathbb{Z}_{\geq 1}\text{.}
\end{align*}
It is easy to verify that there exists a complex hyperplane $B$ through $0\in\mathbb{C}^n$, such that $P$ is {\emph{not}} pluriharmonic along $B$ and $B$ does not contain $V$. By repeating the same argument and again deleting some members of the sequence if necessary, we find an $(n-2)$-dimensional complex vector subspace $W$ of $B$, such that
\begin{align*}
B\cap\mathcal{H}_j = W\text{ for all }j\in\mathbb{Z}_{\geq 1}\text{.}
\end{align*}
Hence every $\mathcal{H}_j$ contains $V+W$. However, $B$ contains $W$ but does not contain $V$, so we get that $V+W$ is at least $(n-1)$-dimensional. We conclude that $\mathcal{H}_j = V+W$ for all $j\in\mathbb{Z}_{\geq 1}$. Since the members of the sequence $(\mathcal{H}_j )_{j\in\mathbb{Z}_{\geq 1}}$ were chosen to be pairwise distinct, we have arrived at the desired contradiction.
\end{proof}

\section{Proof of Proposition \ref{propositioncounterexample}}\label{sectioncounterexamplepropos}
Let $P$ and $G$ be as in the statement of Proposition \ref{propositioncounterexample}. It is obvious that $P$ is indeed a non-constant, homogeneous, plurisubharmonic polynomial without pluriharmonic terms and that $P$ is homogeneous of degree $2$ in $z$, $\overline{z}$, so $P$ is homogeneous in one variable separately. Furthermore, away from the coordinate hyperplanes, $P$ is pluriharmonic along the level sets of $G$ by Lemma \ref{lemmahomogsepidentifyholmap}.

{\emph{Assume for the sake of a contradiction}} that there exist a homogeneous, plurisubharmonic polynomial $Q\colon\mathbb{C}^2\to\mathbb{R}$ and holomorphic polynomials $F_1 ,F_2\colon\mathbb{C}^3\to\mathbb{C}$, homogeneous of the same degree, such that $P=Q\circ (F_1 , F_2 )$ on $\mathbb{C}^3$. As in Section \ref{prelimsection} we write
\begin{align*}
P(z,w_1 ,w_2 )= & \overline{z}\cdot{\overline{w_1}^2}\cdot\big(2z{w_1}^2 -zw_1 w_2\big)+\overline{z}\cdot\overline{w_1}\cdot\overline{w_2}\cdot\big(zw_1 w_2 -z{w_1}^2\big)\\
& +\overline{z}\cdot\overline{w_2}^2\cdot\big(z{w_2}^2\big)\text{.}
\end{align*}
Even though the holomorphic map $(F_1 , F_2 )\colon\mathbb{C}^3\to\mathbb{C}^2$ is (a priori) not necessarily non-singular, an argument analogous to the proof of Lemma \ref{lemmaplurihalonglevelsets} gives that the matrix
\begin{align*}
\begin{pmatrix}
\frac{\partial F_1}{\partial z}   & \frac{\partial F_1}{\partial w_1} & \frac{\partial F_1}{\partial w_2} \\
\frac{\partial F_2}{\partial z}   & \frac{\partial F_2}{\partial w_1} & \frac{\partial F_2}{\partial w_2} \\
2{w_1}^2-w_1 w_2 & 4zw_1 -zw_2 & -zw_1 \\
w_1 w_2 -{w_1}^2 & -2zw_1 +zw_2 & zw_1 \\
{w_2}^2 & 0 & 2zw_2
\end{pmatrix}
\end{align*}
has the same rank as the matrix
\begin{align*}
\begin{pmatrix}
\frac{\partial F_1}{\partial z}   & \frac{\partial F_1}{\partial w_1} & \frac{\partial F_1}{\partial w_2} \\
\frac{\partial F_2}{\partial z}   & \frac{\partial F_2}{\partial w_1} & \frac{\partial F_2}{\partial w_2}
\end{pmatrix}
\end{align*}
at every point of some non-empty open subset $U$ of $\mathbb{C}^3$, which does not meet the coordinate hyperplanes. In particular, said rank is $2$ and $(F_1 ,F_2 )$ is non-singular on $U$; hence $F_1$ and $F_2$ are both non-constant and homogeneous of degree $d:=\deg F_1 =\deg F_2 \geq 1$. 
After adding the fourth row (of the former matrix) to the third row and applying the identity theorem, we get that the following holds on $\mathbb{C}^3$ for $j\in\{1,2\}$:
\begin{align*}
0=\det\begin{pmatrix}
\frac{\partial F_j}{\partial z}   & \frac{\partial F_j}{\partial w_1} & \frac{\partial F_j}{\partial w_2} \\
{w_1}^2 & 2zw_1 & 0\\
{w_2}^2 & 0 & 2zw_2
\end{pmatrix}\text{,}
\end{align*}
and, using that $F_j$ is homogeneous, a calculation then gives
\begin{align*}
3z\frac{\partial F_j}{\partial z}=z\frac{\partial F_j}{\partial z}+w_1\frac{\partial F_j}{\partial w_1}+w_2\frac{\partial F_j}{\partial w_2}=d\cdot F_j\text{.}
\end{align*}
From this we readily deduce that $d/3$ is a positive integer and that there exist holomorphic polynomials $0\not\equiv f_1 ,f_2\colon\mathbb{C}^2\to\mathbb{C}$, homogeneous of degree $2d/3$, such that
\begin{align*}
F_j (z,w_1 ,w_2 )=z^{\frac{d}{3}}\cdot f_j (w_1 ,w_2 )\text{ on }\mathbb{C}^3
\end{align*}
for $j\in\{1,2\}$. Since $Q\colon\mathbb{C}^2\to\mathbb{R}$ is a homogeneous, plurisubharmonic (and clearly also non-pluriharmonic) polynomial, its degree $\deg Q$ is even. But $d/3$ is an integer and $6=\deg P=d\cdot \deg Q$, so we necessarily have $\deg Q=2$ and $d=3$. In particular, $f_j$ is homogeneous of degree $2$ and $F_j(z, w_1 ,w_2 )=z\cdot f_j (w_1 ,w_2 )$ on $\mathbb{C}^3$ for $j\in\{1,2\}$. Since $P$ does not have any pluriharmonic terms, we can assume that $Q$ does not have any pluriharmonic terms either. So there exist $a,c\in\mathbb{R}$, $b\in\mathbb{C}$, such that we have for all $(x,y)\in\mathbb{C}^2$:
\begin{align*}
Q(x,y)=a\cdot |x|^2 +b\cdot x\overline{y}+\overline{b}\cdot\overline{x}y+c\cdot |y|^2\text{.}
\end{align*}
For $j\in\{1,2\}$ we furthermore find $\sigma_j ,\rho_j ,\mu_j\in\mathbb{C}$, such that we have for all $(w_1 ,w_2 )\in\mathbb{C}^2$:
\begin{align*}
f_j (w_1 ,w_2 )=\sigma_j {w_1}^2 +\rho_j w_1 w_2 +\mu_j {w_2}^2\text{.}
\end{align*}
A calculation then shows that
\begin{align*}
& Q(z\cdot f_1 (w_1 ,w_2 ),z\cdot f_2 (w_1 ,w_2 ))\\
= & |z|^2\cdot\Big(
\overline{w_1}^2\cdot  g_1 (w_1 ,w_2 ) +\overline{w_1 w_2}\cdot  h(w_1 ,w_2 ) +\overline{w_2}^2\cdot  g_2 (w_1 ,w_2 ) 
\Big)
\end{align*}
for some $g_1 ,h,g_2\colon\mathbb{C}^2\to\mathbb{C}$ contained in the $\mathbb{C}$-vector space $\mathcal{V}$ spanned by $f_1$ and $f_2$; we trivially have $\dim_\mathbb{C} \mathcal{V}\leq 2$. Recalling that $P=Q\circ (F_1 ,F_2 )$, we necessarily have
\begin{align*}
g_1 (w_1 ,w_2 )=2{w_1}^2 -w_1 w_2\text{,} && h(w_1 ,w_2 )=w_1 w_2 -{w_1}^2\text{,} && g_2 (w_1 ,w_2 )={w_2}^2
\end{align*}
which implies that $\dim_\mathbb{C} \mathcal{V}\geq 3$. We have arrived at the desired contradiction.

\section{Proof of Theorem \ref{theoremfolisinglefct}}\label{folisinglefctsection}

In this section we will (without further comment) identify holomorphic polynomials $\mathbb{C}^n\to\mathbb{C}$ with elements of the polynomial ring $\mathbb{C}[z_1 ,\dots , z_n ]$ in the obvious way.

\theoremstyle{definition}
\newtheorem{notationmaxholroot}[propo]{Notation}
\begin{notationmaxholroot}
	\label{notationmaxholroot}
	Let $0\neq g\in\mathbb{C}[z_1 ,\dots , z_n ]$ be a homogeneous polynomial of positive degree. Then the set
	\begin{align*}
	\{m\in\mathbb{Z}_{>0}\colon g=\widetilde{g}^m\text{ for some homogeneous }\widetilde{g}\in\mathbb{C}[z_1 ,\dots ,z_n ] \}
	\end{align*}
	is clearly non-empty and bounded from above. Hence it has a maximum, which we denote as $M_g\in\mathbb{Z}_{>0}$.
\end{notationmaxholroot}

\theoremstyle{plain}
\newtheorem{lemmaalgebraic}[propo]{Lemma}
\begin{lemmaalgebraic}
	\label{lemmaalgebraic}
	Let $0\neq g\in\mathbb{C}[z_1 ,\dots , z_n ]$ be a homogeneous polynomial of positive degree and let $h\in\mathbb{C}[z_1 ,\dots , z_n ]$ be any homogeneous polynomial with $g=h^{M_g}$ (see Notation \ref{notationmaxholroot}).
	If $0\neq f\in\mathbb{C}[z_1 ,\dots , z_n ]$ is a homogeneous polynomial of positive degree satisfying
	\begin{align*}
	(\deg g)\cdot g\cdot\frac{\partial f}{\partial z_l}=(\deg f)\cdot f\cdot\frac{\partial g}{\partial z_l}\text{ for all }l\in\{1,\dots ,n\}\text{,}
	\end{align*}
	then $M_g\cdot (\deg f)/(\deg g)$ is a positive integer and there exists a $c\in\mathbb{C}\setminus\{0\}$, such that $f=c\cdot h^{M_g\cdot\frac{\deg f}{\deg g}}$.
\end{lemmaalgebraic}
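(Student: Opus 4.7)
The plan is to first repackage the system of first-order equations as a single polynomial identity $f^e=Cg^k$ (with $e:=\deg g$, $k:=\deg f$, and $C\in\mathbb{C}\setminus\{0\}$), and then to use the maximality built into $M_g$, together with unique factorization in $\mathbb{C}[z_1,\dots,z_n]$, to extract the desired root.

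For the first step I would simply compute
\[
\frac{\partial}{\partial z_l}\!\left(\frac{f^e}{g^k}\right)=\frac{f^{e-1}}{g^{k+1}}\left(e\,g\,\frac{\partial f}{\partial z_l}-k\,f\,\frac{\partial g}{\partial z_l}\right),
\]
which vanishes for every $l$ by hypothesis. Hence the rational function $f^e/g^k\in\mathbb{C}(z_1,\dots,z_n)$ has all partial derivatives equal to zero, so it is a constant $C$, and $C\in\mathbb{C}\setminus\{0\}$ because $f,g\neq 0$. Thus $f^e=Cg^k$ in $\mathbb{C}[z_1,\dots,z_n]$, which upon substituting $g=h^{M_g}$ and $e=M_g\deg h$ becomes $f^{M_g\deg h}=Ch^{M_g k}$.

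For the second step I would invoke unique factorization in $\mathbb{C}[z_1,\dots,z_n]$. Writing $h=c_h\prod_i p_i^{a_i}$ with pairwise non-associate irreducibles $p_i$ (these are automatically homogeneous, being irreducible factors of a homogeneous polynomial), the crucial preliminary claim is that $\gcd_i(a_i)=1$. Otherwise, if some $r>1$ were this gcd, then because $\mathbb{C}$ is algebraically closed one could choose an $r$-th root $\gamma$ of $c_h$ and set $\tilde h:=\gamma\prod_i p_i^{a_i/r}$, obtaining $\tilde h^r=h$ and therefore $g=\tilde h^{rM_g}$, contradicting the maximality of $M_g$. Any irreducible factor of $f$ divides $Ch^{M_g k}$ and hence divides $h$, so it is associate to some $p_i$; thus one can write $f=c_f\prod_i p_i^{d_i}$ using the same family of irreducibles, and comparing exponents in $f^{M_g\deg h}=Ch^{M_g k}$ yields $d_i\,\deg h=k\,a_i$ for every $i$.

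The final step is to upgrade the pointwise divisibilities $\deg h\mid k a_i$ to $\deg h\mid k$, which follows from $\gcd_i(a_i)=1$ via a B\'ezout identity $\sum_i n_i a_i=1$ with $n_i\in\mathbb{Z}$. Hence $m:=k/\deg h=M_g\cdot(\deg f)/(\deg g)$ is a positive integer, each $d_i=m a_i$, and therefore $f=c_f c_h^{-m}h^m=c\cdot h^m$ with $c:=c_f c_h^{-m}\in\mathbb{C}\setminus\{0\}$, as required. I expect the main obstacle to be the maximality argument giving $\gcd_i(a_i)=1$, which is where both the nontrivial choice of the root $\gamma$ (needing algebraic closedness of $\mathbb{C}$) and the homogeneity of the $p_i$ enter; the remainder reduces to standard UFD and B\'ezout bookkeeping.
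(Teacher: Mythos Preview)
Your proof is correct and reaches the same conclusion, but the first half takes a genuinely different route from the paper.

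The paper never writes down the integrated identity $f^{\deg g}=C\,g^{\deg f}$. Instead it works directly with the differential hypothesis: starting from the prime factorization $g=u\prod_j p_j^{\alpha_j}$, it expands $\partial g/\partial z_l$ via the product rule and uses $(\deg g)\,g\,\partial_l f=(\deg f)\,f\,\partial_l g$ to argue, one prime at a time, that each $p_s$ divides $f$ (and symmetrically that each prime of $f$ divides $g$). Hence $f=v\prod_j p_j^{\beta_j}$ with the \emph{same} primes. Substituting both factorizations back into the differential relation and again isolating one prime at a time yields $\beta_j/\alpha_j=\deg f/\deg g$ for every $j$. From that point on the paper proceeds exactly as you do: maximality of $M_g$ forces the exponents $\gamma_j=\alpha_j/M_g$ of $h$ to satisfy $\gcd_j(\gamma_j)=1$, and a B\'ezout combination shows $M_g\cdot\deg f/\deg g\in\mathbb{Z}_{>0}$.

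Your approach---observe that $f^{\deg g}/g^{\deg f}$ has all partial derivatives zero and is therefore a nonzero constant, then read off the exponent relations $d_i\deg h=k\,a_i$ from the single identity $f^{M_g\deg h}=C\,h^{M_g k}$ via unique factorization---replaces two rounds of prime-by-prime divisibility chasing with one integration step, and is cleaner. The paper's argument, on the other hand, stays entirely inside $\mathbb{C}[z_1,\dots,z_n]$ and never needs to pass to rational functions or invoke that a rational function with vanishing partials is constant. Both proofs ultimately hinge on the same two nontrivial ingredients you correctly singled out: the maximality of $M_g$ giving $\gcd_i(a_i)=1$, and the B\'ezout step promoting $\deg h\mid k\,a_i$ for all $i$ to $\deg h\mid k$.
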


\begin{proof}
Let $g,h,f\in\mathbb{C}[z_1 ,\dots ,z_n ]$ be as in the statement of the lemma. Write
\begin{align*}
g=u{p_1}^{\alpha_1}\cdots{p_m}^{\alpha_m}\text{,}
\end{align*}
where $m$ is a positive integer (since $\deg g>0$), $u\in\mathbb{C}\setminus\{0\}$ is a unit, $p_1,\dots ,p_m\in\mathbb{C}[z_1 ,\dots ,z_n ]$ are pairwise non-associate primes, and $\alpha_1 ,\dots ,\alpha_m$ are positive integers. Since $\deg f>0$, we get that $g$ divides $f\cdot (\partial g)/(\partial z_l )$ for $l=1,\dots ,n$. Hence, considering any $s\in\{1,\dots ,m\}$, we get that ${p_s}^{\alpha_s}$ divides
\begin{align*}
f\cdot\frac{\partial g}{\partial z_l}=f\cdot u\cdot \sum_{j=1}^{m}{p_1}^{\alpha_1}\cdots\widehat{{p_j}^{\alpha_j}}\cdots{p_m}^{\alpha_m}\cdot\alpha_j\cdot {p_j}^{\alpha_j -1}\cdot\frac{\partial p_j}{\partial z_l}\text{.}
\end{align*}
If $j\neq s$, then the corresponding summand is trivially divisible by ${p_s}^{\alpha_s}$. But this implies that ${p_s}^{\alpha_s}$ divides
\begin{align*}
f\cdot u\cdot{p_1}^{\alpha_1}\cdots\widehat{{p_s}^{\alpha_s}}\cdots{p_m}^{\alpha_m}\cdot\alpha_s\cdot {p_s}^{\alpha_s -1}\cdot\frac{\partial p_s}{\partial z_l}\text{.}
\end{align*}
Hence
\begin{align*}
p_s\biggm| f\cdot {p_1}^{\alpha_1}\cdots\widehat{{p_s}^{\alpha_s}}\cdots{p_m}^{\alpha_m}\cdot\frac{\partial p_s}{\partial z_l}\text{.}
\end{align*}
Since $p_s$ is prime, it divides one of the factors. Since the primes $p_1,\dots ,p_m$ are pairwise non-associate, we get for all $l\in\{1,\dots ,n\}$:
\begin{align*}
p_s\bigm| f\text{ or }p_s\biggm|\frac{\partial p_s}{\partial z_l}\text{.}
\end{align*}
But since $p_s$ is prime and hence $(\partial p_s )/(\partial z_{l_s} )\neq 0$ for some $l_s\in\{1,\dots ,n\}$, we get $p_s\bigm| f$. We have shown that every prime factor of $g$ divides $f$. But by reversing the roles of $f$ and $g$ and repeating the same argument, we also get that every prime factor of $f$ divides $g$. We conclude that there exist a unit $v\in\mathbb{C}\setminus\{0\}$ and positive integers $\beta_1 ,\dots ,\beta_m$, such that
\begin{align*}
f=v{p_1}^{\beta_1}\cdots{p_m}^{\beta_m}\text{.}
\end{align*}
By assumption we then have for all $l\in\{1,\dots ,n\}$:
\begin{align*}
& (\deg g)\cdot u{p_1}^{\alpha_1}\cdots{p_m}^{\alpha_m}\cdot v\cdot \sum_{j=1}^{m}{p_1}^{\beta_1}\cdots\widehat{{p_j}^{\beta_j}}\cdots{p_m}^{\beta_m}\cdot\beta_j\cdot {p_j}^{\beta_j -1}\cdot\frac{\partial p_j}{\partial z_l}\\
= & (\deg f)\cdot v{p_1}^{\beta_1}\cdots{p_m}^{\beta_m}\cdot u\cdot \sum_{j=1}^{m}{p_1}^{\alpha_1}\cdots\widehat{{p_j}^{\alpha_j}}\cdots{p_m}^{\alpha_m}\cdot\alpha_j\cdot {p_j}^{\alpha_j -1}\cdot\frac{\partial p_j}{\partial z_l}\text{,}
\end{align*}
and hence
\begin{align*}
0=\sum_{j=1}^{m}p_1\cdots\widehat{p_j}\cdots p_m\cdot\frac{\partial p_j}{\partial z_l}\cdot (\beta_j (\deg g)-\alpha_j (\deg f))
\text{.}
\end{align*}
Considering any $s\in\{1,\dots ,m\}$, we note that $p_s$ obviously divides the $j$-th summand for $j\neq s$, and hence
\begin{align*}
p_s\biggm| p_1\cdots\widehat{p_s}\cdots p_m\cdot\frac{\partial p_s}{\partial z_l}\cdot (\beta_s (\deg g)-\alpha_s (\deg f))\text{.}
\end{align*}
Using again that the primes $p_1,\dots ,p_m$ are pairwise non-associate and considering some $l_s\in\{1,\dots ,n\}$ with $(\partial p_s )/(\partial z_{l_s})\neq 0$, we get
\begin{align*}
p_s\bigm| (\beta_s (\deg g)-\alpha_s (\deg f))\text{,}
\end{align*}
i.e., $\beta_s (\deg g)-\alpha_s (\deg f)=0$. So, since $s$ was chosen arbitrarily, we have
\begin{align*}
\frac{\beta_j}{\alpha_j}=\frac{\deg f}{\deg g}\text{ for all }j\in\{1,\dots ,m\}\text{.}
\end{align*}
Since $g=h^{M_g}$, we can write
\begin{align*}
h=w{p_1}^{\gamma_1}\cdots{p_m}^{\gamma_m}\text{,}
\end{align*}
where $w\in\mathbb{C}\setminus\{0\}$, $w^{M_g}=u$, and $\gamma_j=\alpha_j /M_g$ is a positive integer for $j=1,\dots ,m$. Due to the defining properties of $M_g$ (see Notation \ref{notationmaxholroot}) we furthermore have $\gcd (\gamma_1 ,\dots ,\gamma_m )=1$, i.e., there exist $d_1 ,\dots ,d_m\in\mathbb{Z}$, such that
\begin{align*}
1=\sum_{j=1}^{m}d_j\gamma_j\text{.}
\end{align*}
Hence
\begin{align*}
M_g\cdot\frac{\deg f}{\deg g}=\sum_{j=1}^{m}d_j\cdot\gamma_j\cdot M_g\cdot\frac{\deg f}{\deg g}=\sum_{j=1}^{m}d_j\cdot\frac{\alpha_j}{M_g}\cdot M_g\cdot\frac{\beta_j}{\alpha_j}=\sum_{j=1}^{m}d_j\beta_j
\end{align*}
is a (positive) integer, as desired. Finally, we compute
\begin{align*}
h^{M_g\cdot\frac{\deg f}{\deg g}} & =w^{M_g\cdot\frac{\deg f}{\deg g}}\cdot\prod_{j=1}^{m}\left({p_j}^{\gamma_j}\right)^{M_g\cdot\frac{\beta_j}{\alpha_j}}\\
& =w^{M_g\cdot\frac{\deg f}{\deg g}}\cdot\prod_{j=1}^{m}\left({p_j}^{\frac{\alpha_j}{M_g}}\right)^{M_g\cdot\frac{\beta_j}{\alpha_j}}\\
& =w^{M_g\cdot\frac{\deg f}{\deg g}}{p_1}^{\beta_1}\cdots{p_m}^{\beta_m}\\
& =\frac{w^{M_g\cdot\frac{\deg f}{\deg g}}}{v}\cdot f\text{,}
\end{align*}
and the claim follows.
\end{proof}

Armed with Lemma \ref{lemmaalgebraic}, we can provide a proof for Theorem \ref{theoremfolisinglefct}.

\begin{proof}[Proof of Theorem \ref{theoremfolisinglefct}]
\begin{comment}
Using that $P$ is homogeneous, we readily reduce to the case where $G\colon\mathbb{C}^n\to\mathbb{C}$ is a homogeneous holomorphic polynomial.
\end{comment}
As in Section \ref{prelimsection} we write
\begin{align*}
P(z)=\sum_{\beta\colon 1\leq |\beta |\leq 2k-1}{\overline{z}^{\beta}P_{\beta}(z)}\text{.}
\end{align*}
We then apply Lemma \ref{lemmaplurihalonglevelsets} and get for all $\beta\in (\mathbb{Z}_{\geq 0})^n$ with $1\leq |\beta |\leq 2k-1$ and for all $j,l\in\{1,\dots ,n\}$:
\begin{align*}
\frac{\partial P_\beta}{\partial z_l}\frac{\partial G}{\partial z_j}=\frac{\partial P_\beta}{\partial z_j}\frac{\partial G}{\partial z_l}\text{ on }\mathbb{C}^n\text{.}
\end{align*}
Writing $G=G(0)+q+R$, where $q\colon\mathbb{C}^n\to\mathbb{C}$ is a non-constant, homogeneous, holomorphic polynomial and $R\colon\mathbb{C}^n\to\mathbb{C}$ is a holomorphic function whose Taylor series at $0$ does not involve any terms of degree $\leq \deg q$, we get, owing to the fact that the $P_\beta$ are homogeneous:
\begin{align*}
\frac{\partial P_\beta}{\partial z_l}\frac{\partial q}{\partial z_j}=\frac{\partial P_\beta}{\partial z_j}\frac{\partial q}{\partial z_l}\text{ on }\mathbb{C}^n
\end{align*}
for all $\beta\in (\mathbb{Z}_{\geq 0})^n$ with $1\leq |\beta |\leq 2k-1$ and for all $j,l\in\{1,\dots ,n\}$. By multiplying with $z_j$ and then summing over $j$, we get, using that both $P_\beta$ and $q$ are homogeneous:
\begin{align*}
(\deg P_\beta )\cdot P_\beta\cdot \frac{\partial q}{\partial z_l}=(\deg q)\cdot q\cdot \frac{\partial P_\beta}{\partial z_l}\text{ on }\mathbb{C}^n
\end{align*}
for all $\beta ,l$. Let $h\colon\mathbb{C}^n\to\mathbb{C}$ be any homogeneous, holomorphic polynomial with $q=h^{M_q}$ (see Notation \ref{notationmaxholroot}). Lemma \ref{lemmaalgebraic} then implies that, for a given $\beta$, the following is true:
\begin{itemize}
	\item if $M_q\cdot\frac{\deg P_\beta}{\deg q}=M_q\cdot\frac{\deg P_\beta}{M_q\cdot\deg h}=\frac{2k-|\beta |}{\deg h}$ is not a positive integer, then $P_\beta \equiv 0$,
	\item if $\frac{2k-|\beta |}{\deg h}$ is a positive integer, then there exists a $c_\beta\in\mathbb{C}$, such that
	\begin{align*}
	P_\beta =c_\beta\cdot h^{M_q\cdot\frac{\deg P_\beta}{\deg q}}=c_\beta\cdot h^{\frac{2k-|\beta |}{\deg h}}
	\end{align*}
	(if $P_\beta\equiv 0$, take $c_\beta =0$, otherwise apply Lemma \ref{lemmaalgebraic}).
\end{itemize}
Since $P\not\equiv 0$, there exists a positive integer $L$, such that
\begin{align*}
\{1,\dots ,L\}=\left\{\frac{2k-|\beta |}{\deg h}\colon\beta\in (\mathbb{Z}_{\geq 0})^n,1\leq |\beta |\leq 2k-1, \frac{2k-|\beta |}{\deg h}\in\mathbb{Z}_{>0} \right\}\text{.}
\end{align*}
Hence we can write on $\mathbb{C}^n$:
\begin{align*}
P(z) & =\sum_{l=1}^{L}\sum_{\beta\colon |\beta |=2k-l\cdot\deg h}\overline{z}^{\beta}\cdot c_\beta\cdot (h(z))^l\\
& =\sum_{l=1}^{L} (h(z))^l\cdot\sum_{\beta\colon |\beta |=2k-l\cdot\deg h}c_\beta\cdot\overline{z}^{\beta}\\
& =\sum_{l=1}^{L} \overline{h(z)}^l\cdot\sum_{\beta\colon |\beta |=2k-l\cdot\deg h}\overline{c_\beta}\cdot{z}^{\beta}\text{,}
\end{align*}
where the last equality is due to the fact that $P$ is real-valued. Since $h^l\not\equiv 0$, we can find an $\alpha_l\in (\mathbb{Z}_{\geq 0})^n$ with $|\alpha_l |=l\cdot\deg h$, such that $z^{\alpha_l}$ appears with coefficient $\gamma_l\neq 0$ in the Taylor expansion of $h^l$ at $0$ (note that $\alpha_l$ is not uniquely determined in general). Recalling that
\begin{align*}
P(z)=\sum_{\beta\colon 1\leq |\beta |\leq 2k-1}{\overline{z}^{\beta}P_{\beta}(z)}\text{,}
\end{align*}
we see that we necessarily have (recall that $h$ is homogeneous):
\begin{align*}
P_{\alpha_l}(z)=\overline{\gamma_l}\cdot \sum_{\beta\colon |\beta |=2k-l\cdot\deg h}\overline{c_\beta}\cdot{z}^{\beta}\text{,}
\end{align*}
and hence
\begin{align*}
P(z)=\sum_{l=1}^{L} \overline{h(z)}^l\cdot\frac{1}{\overline{\gamma_l}}\cdot P_{\alpha_l}(z)\text{.}
\end{align*}
Assume for the sake of a contradiction that $2k/(\deg h)$ is not an integer. Then we have for all $l=1,\dots ,L$ that
\begin{align*}
\frac{2k-|\alpha_l |}{\deg h}=\frac{2k}{\deg h}-l\not\in\mathbb{Z},
\end{align*}
which implies that $P_{\alpha_l}\equiv 0$ (see above). But then $P\equiv 0$ and we arrive at the desired contradiction. Hence $2k/(\deg h)\in\mathbb{Z}$. But then we have that $\frac{2k-|\alpha_l |}{\deg h}$ is a positive integer for $l=1,\dots ,L$, which, using the above, implies that
\begin{align*}
P_{\alpha_l}=c_{\alpha_l}\cdot h^{\frac{2k-|\alpha_l |}{\deg h}}=c_{\alpha_l}\cdot h^{\frac{2k}{\deg h}-l}\text{.}
\end{align*}
We now define $s\colon\mathbb{C}\to\mathbb{R}$,
\begin{align*}
\tau\mapsto\sum_{l=1}^{L}\frac{c_{{\alpha_l}}}{\overline{\gamma_l}}\cdot\overline{\tau}^l\cdot\tau^{\frac{2k}{\deg h}-l}\text{.}
\end{align*}
Note that $s$ is indeed real-valued. It is now easy to see that $s$ and $h$ have all the desired properties.
\end{proof}

\section{Proof of Theorem \ref{theoremhomogseparately} and Corollary \ref{corollaryhomogeveryvarseparately}}\label{sectionproofoftheoremhomogseparately}
In this section we will provide a proof for Theorem \ref{theoremhomogseparately}. We will not provide a separate proof for Corollary \ref{corollaryhomogeveryvarseparately}, since it will be obvious from the proof of Theorem \ref{theoremhomogseparately}.

\begin{proof}[Proof of Theorem \ref{theoremhomogseparately}]
We adapt the notation from Section \ref{prelimsection}. In particular we write
\begin{align*}
P(z)=\sum_{\beta\colon 1\leq |\beta |\leq 2k-1}{\overline{z}^{\beta}P_{\beta}(z)}=\sum_{\beta\in B}{\overline{z}^{\beta}P_{\beta}(z)}\text{,}
\end{align*}
where $B=\{\beta\in ({\mathbb{Z}_{\geq 0}})^n\colon 1\leq |\beta |\leq 2k-1\text{ and }P_{\beta}\not\equiv 0 \}$.
By assumption on $P$ we find, for every $\beta \in B$, a holomorphic polynomial $q_{\beta}\colon\mathbb{C}^{n-l}\to\mathbb{C}$, homogeneous of degree $2k-(2d_1 +\dots +2d_l +\beta_{l+1}+\dots +\beta_n )$, such that
\begin{align*}
P_\beta (z_1,\dots ,z_n )={z_1}^{2d_1 -\beta_1}\cdots{z_l}^{2d_l -\beta_l}q_{\beta}(z_{l+1},\dots ,z_{n})\text{.}
\end{align*}
For ease of notation we write $m=n-l$ and $(w_1 ,\dots ,w_m )=(z_{l+1},\dots ,z_{n})$. We will switch back and forth between notations whenever convenient.
By Lemma \ref{lemmahomogsepidentifyholmap}, away from the coordinate hyperplanes, $P$ is pluriharmonic along the level sets of $G\colon\mathbb{C}^n\to\mathbb{C}^{m}$,
\begin{align*}
G(z_1 ,\dots ,z_l ,w_1 ,\dots ,w_m )={z_1}^{d_1}\cdots{z_l}^{d_l}\cdot \left({w_1}^{k-D},\dots ,{w_m}^{k-D}\right)\text{.}
\end{align*}
Lemma \ref{lemmaplurihalonglevelsets} then gives that the following holds on $\mathbb{C}^n$ for all $\beta\in B$ and for all $\nu\in\{1,\dots ,n\}$:
\begin{align*}
\det
\begin{pmatrix}
\frac{\partial G_1}{\partial w_1} & \dots  & \frac{\partial G_1}{\partial w_m} \\
\vdots &  \ddots & \vdots \\
\frac{\partial G_m}{\partial w_1} & \dots  & \frac{\partial G_m}{\partial w_m}
\end{pmatrix}\cdot \frac{\partial P_{\beta}}{\partial z_\nu}
=\sum_{j=1}^{m}\det
\begin{pmatrix}
\frac{\partial G_1}{\partial w_1} & \dots  & \frac{\partial G_1}{\partial w_m} \\
\vdots &  \ddots & \vdots \\
\frac{\partial G_{j-1}}{\partial w_1} & \dots  & \frac{\partial G_{j-1}}{\partial w_m} \\
\frac{\partial P_{\beta}}{\partial w_1} & \dots  & \frac{\partial P_{\beta}}{\partial w_m} \\
\frac{\partial G_{j+1}}{\partial w_1} & \dots  & \frac{\partial G_{j+1}}{\partial w_m} \\
\vdots &  \ddots & \vdots \\
\frac{\partial G_m}{\partial w_1} & \dots  & \frac{\partial G_m}{\partial w_m}
\end{pmatrix}\cdot \frac{\partial G_j}{\partial z_\nu}\text{.}
\end{align*}
A calculation then shows that the following holds on $\mathbb{C}^n$ for all $\beta\in B$, $\nu\in\{1,\dots ,l\}$:
\begin{align*}
(k-D)(2d_\nu -\beta_\nu ) q_\beta (w_1 ,\dots ,w_m )=d_\nu \sum_{j=1}^{m} w_j\frac{\partial q_\beta}{\partial w_j}(w_1 ,\dots ,w_m )\text{,}
\end{align*}
but since $q_\beta\not\equiv 0$ is homogeneous, this simplifies to
\begin{align*}
& (k-D)(2d_\nu -\beta_\nu ) q_\beta (w_1 ,\dots ,w_m )\\
= & d_\nu \cdot (2k-(2d_1 +\dots +2d_l +\beta_{l+1}+\dots +\beta_n ))\cdot q_\beta (w_1 ,\dots ,w_m )\text{.}
\end{align*}
For $\beta\in B$ we have $q_\beta\not\equiv 0$, so we get for all $\beta\in B$, $\nu\in\{1,\dots ,l\}$:
\begin{align*}
(k-D)(2d_\nu -\beta_\nu )= & d_\nu \cdot (2k-(2d_1 +\dots +2d_l +\beta_{l+1}+\dots +\beta_n ))\\
= & d_\nu\cdot M_\beta\text{,}
\end{align*}
where $M_\beta$ is defined in the obvious way. Since $d=\gcd (d_1 ,\dots ,d_l ,k)$, there exist $c,c_1 ,\dots ,c_l\in\mathbb{Z}$, such that $d=c\cdot (k-D)+\sum_{\nu =1}^{l}c_\nu\cdot d_\nu$. But then we have for all $\beta\in B$:
\begin{align*}
\frac{d\cdot M_\beta}{k-D}=c\cdot M_\beta + \sum_{\nu =1}^{l}c_\nu d_\nu \frac{M_\beta}{k-D}=c\cdot M_\beta +\sum_{\nu =1}^{l}c_\nu (2d\nu -\beta_\nu )\text{,}
\end{align*}
so $d\cdot M_\beta /(k-D)$ is an integer for all $\beta\in B$. If $M_\beta$ was $0$ for some $\beta\in B$, then $2d_\nu -\beta_\nu$ would be $0$ for $\nu\in\{1,\dots ,l\}$, implying that $|\beta |=2k$, in contradiction to $\beta\in B$. Hence $d\cdot M_\beta /(k-D)$ is a {\emph{positive}} integer for all $\beta\in B$. This implies that for all $\beta\in B$ we have
\begin{align*}
P_\beta (z_1 ,\dots ,z_n )=\left({z_1}^{\frac{d_1}{d}}\cdots{z_l}^{\frac{d_l}{d}}\right)^{\frac{d\cdot M_\beta}{k-D}}\cdot q_\beta (z_{l+1},\dots ,z_n )\text{,}
\end{align*}
where all occurring exponents are positive integers. If $\beta\in B$, then $M_\beta\leq 2(k-D)$. If this was an equality, then, similarly to above, we would get $|\beta |=0$, in contradiction to $\beta\in B$. Hence we have for all $\beta\in B$:
\begin{align*}
\frac{d\cdot M_\beta}{k-D}, 2d-\frac{d\cdot M_\beta}{k-D}\in\{1,2,\dots ,2d-1\}\text{.}
\end{align*}
For $\beta\in B$, $\nu\in\{1,\dots ,l\}$ we have
\begin{align*}
\beta_\nu =2d_\nu -\frac{d_\nu\cdot M_\beta}{k-D}=\frac{d_\nu}{d}\cdot\left( 2d-\frac{d\cdot M_\beta}{k-D} \right)\text{,}
\end{align*}
and both factors in the latter equality are positive integers.
%For all $\beta\in ({\mathbb{Z}_{\geq 0}})^n\setminus B$ we set $q_\beta :\equiv 0$.
We now calculate, noting that all occurring exponents are positive integers:
\begin{align*}
	P(z) & =\sum_{\beta\in B}{\overline{z}^{\beta}P_{\beta}(z)}\\
	& =\sum_{\beta\in B}\left({z_1}^{\frac{d_1}{d}}\cdots{z_l}^{\frac{d_l}{d}}\right)^{\frac{d\cdot M_\beta}{k-D}}\cdot{\overline{\left({z_1}^{\frac{d_1}{d}}\cdots{z_l}^{\frac{d_l}{d}}\right)}^{2d-\frac{d\cdot M_\beta}{k-D}}}\\
	& \phantom{=\sum{\beta\in B}}\cdot \overline{z_{l+1}}^{\beta_{l+1}}\cdots\overline{z_n}^{\beta_n}q_{\beta}(z_{l+1},\dots ,z_n )\\
	& =\sum_{j=1}^{2d-1}\left({z_1}^{\frac{d_1}{d}}\cdots{z_l}^{\frac{d_l}{d}}\right)^{j}\cdot{\overline{\left({z_1}^{\frac{d_1}{d}}\cdots{z_l}^{\frac{d_l}{d}}\right)}^{2d-j}}\\
	& \phantom{=\sum{\beta\in B}}\cdot\sum_{\beta\in B\colon \frac{d\cdot M_\beta}{k-D}=j} \overline{z_{l+1}}^{\beta_{l+1}}\cdots\overline{z_n}^{\beta_n}q_{\beta}(z_{l+1},\dots ,z_n )\text{.}	
\end{align*}
Now if $\beta\in B$ and $j\in\{1,\dots ,2d-1\}$ satisfy $d\cdot M_\beta /(k-D)=j$, then every term occurring in $\overline{z_{l+1}}^{\beta_{l+1}}\cdots\overline{z_n}^{\beta_n}q_{\beta}(z_{l+1},\dots ,z_n )$ has holomorphic degree $M_\beta =j\cdot (k-D)/d$ and anti-holomorphic degree $\beta_{l+1} +\dots +\beta_n =(2d-j)\cdot (k-D)/d$. This implies that the polynomial $Q\colon\mathbb{C}^m\to\mathbb{R}$,
\begin{align*}
Q(w_1 ,\dots ,w_m )=\sum_{j=1}^{2d-1}\sum_{\beta\in B\colon \frac{d\cdot M_\beta}{k-D}=j} \overline{w_{1}}^{\beta_{1+l}}\cdots\overline{w_m}^{\beta_{m+l}}q_{\beta}(w_1 ,\dots ,w_m )\text{,}
\end{align*}
has the property that both the holomorphic and the anti-holomorphic degree of every term appearing in $Q$ are divisible by the integer $(k-D)/d$. Furthermore $Q(w_1 ,\dots ,w_m )=P(1,\dots ,1,w_1 ,\dots ,w_m )$, so $Q$ is (indeed real-valued and) plurisubharmonic. It is now easy to see that $Q$ has all the other desired properties.
\end{proof}

\bibliographystyle{amsplain}
\bibliography{refspaper6}

\end{document}